\def\LT@start{%
\let\LT@start\endgraf
\endgraf\penalty\z@\vskip\LTpre
\dimen@\pagetotal
\advance\dimen@ \ht\ifvoid\LT@firsthead\LT@head\else\LT@firsthead\fi
\advance\dimen@ \dp\ifvoid\LT@firsthead\LT@head\else\LT@firsthead\fi
\advance\dimen@ \ht\LT@foot
\dimen@ii\vfuzz
\vfuzz\maxdimen
\setbox\tw@\copy\z@
\setbox\tw@\vsplit\tw@ to \ht\@arstrutbox
\setbox\tw@\vbox{\unvbox\tw@}%
\vfuzz\dimen@ii
\advance\dimen@ \ht
\ifdim\ht\@arstrutbox>\ht\tw@\@arstrutbox\else\tw@\fi
\advance\dimen@\dp
\ifdim\dp\@arstrutbox>\dp\tw@\@arstrutbox\else\tw@\fi
\advance\dimen@ -\pagegoal
\ifdim \dimen@>\z@\unskip\vfil\break\fi
\global\@colroom\@colht
\ifvoid\LT@foot\else
\advance\vsize-\ht\LT@foot
\global\advance\@colroom-\ht\LT@foot
\dimen@\pagegoal\advance\dimen@-\ht\LT@foot\pagegoal\dimen@
\maxdepth\z@
\fi
\ifvoid\LT@firsthead\copy\LT@head\else\box\LT@firsthead\fi
\output{\LT@output}}
\definecolor{dblue}{rgb}{0,0,0.70}
\newtheorem{theorem}{Theorem}[section]	
\newtheorem*{theorem*}{Theorem}
\newaliascnt{lemma}{theorem}
\newtheorem{lemma}[lemma]{Lemma}
\newtheorem*{lemma*}{Lemma}
\newaliascnt{proposition}{theorem}
\newtheorem{proposition}[proposition]{Proposition}
\newtheorem*{proposition*}{Proposition}
\newaliascnt{corollary}{theorem}
\newtheorem{corollary}[corollary]{Corollary}
\theoremstyle{remark}
\newaliascnt{remark}{theorem}
\newaliascnt{question}{theorem}
\newaliascnt{conjecture}{theorem}
\newtheorem*{question*}{Question}
\newaliascnt{definition}{theorem}
\newtheorem{definition}[definition]{Definition}
\newaliascnt{example}{theorem}
\newaliascnt{convention}{theorem}
\newaliascnt{claim}{theorem}
\def\l@subsection{\@tocline{2}{0pt}{1pc}{5pc}{}} \def\l@subsection{\@tocline{2}{0pt}{2pc}{6pc}{}}
\title{The proof-theoretic strength of Constructive Second-order set theories}
\author{Hanul Jeon}
\email{ \href{mailto:hj344@cornell.edu}{hj344@cornell.edu}}
\urladdr{ \href{https://hanuljeon95.github.io}{https://hanuljeon95.github.io} }
\address{Department of Mathematics, Cornell University, Ithaca, NY 14853}
\DeclareRobustCommand\widecheck[1]{{\mathpalette\@widecheck{#1}}}
\def\@widecheck#1#2{%
    \setbox\z@\hbox{\m@th$#1#2$}%
    \setbox\tw@\hbox{\m@th$#1%
       \widehat{%
          \vrule\@width\z@\@height\ht\z@
          \vrule\@height\z@\@width\wd\z@}$}%
    \dp\tw@-\ht\z@
    \@tempdima\ht\z@ \advance\@tempdima2\ht\tw@ \divide\@tempdima\thr@@
    \setbox\tw@\hbox{%
       \raise\@tempdima\hbox{\scalebox{1}[-1]{\lower\@tempdima\box
\tw@}}}%
    {\ooalign{\box\tw@ \cr \box\z@}}}
\mathchardef\mhyphen="2D
\newcommand{\restricts}{\mkern-5mu\upharpoonright}
\newcommand{\rrarrows}{\rightrightarrows}
\newcommand{\lrlrarrows}{\mathrel{\mathrlap{\leftleftarrows}{\rightrightarrows}}}
\newcommand{\lr}{\leftrightarrow}
\newcommand{\ran}{\operatorname{ran}}
\newcommand{\CST}{\mathsf{CST}}
\newcommand{\IZF}{\mathsf{IZF}}
\newcommand{\CZF}{\mathsf{CZF}}
\newcommand{\ZF}{\mathsf{ZF}}
\newcommand{\IGB}{\mathsf{IGB}}
\newcommand{\CGB}{\mathsf{CGB}}
\newcommand{\GB}{\mathsf{GB}}
\newcommand{\IKM}{\mathsf{IKM}}
\newcommand{\CKM}{\mathsf{CKM}}
\newcommand{\KM}{\mathsf{KM}}
\newcommand{\KP}{\mathsf{KP}}
\newcommand{\bbN}{\mathbb{N}}
\newcommand{\bfp}{\mathbf{p}}
\newcommand{\bfq}{\mathbf{q}}
\newcommand{\sfN}{\mathsf{N}}
\newcommand{\sfV}{\mathsf{V}}
\newcommand{\frakt}{\mathfrak{t}}
\newcommand{\lag}{\langle}
\newcommand{\rag}{\rangle}
\newcommand{\len}{\operatorname{len}}
\newcommand{\WF}{\operatorname{WF}}
\newcommand{\immd}{\operatorname{immd}}
\newcommand{\ext}{\operatorname{ext}}
\newcommand{\Set}{\operatorname{Set}}
\begin{document}

\begin{abstract}
    In this paper, we define constructive analogues of second-order set theories, which we will call $\mathsf{IGB}$, $\mathsf{CGB}$, $\mathsf{IKM}$, and $\mathsf{CKM}$. Each of them can be viewed as 
    $\mathsf{IZF}$- and $\mathsf{CZF}$-analogues of G\"odel-Bernays set theory $\mathsf{GB}$ and Kelley-Morse set theory $\mathsf{KM}$.
    We also provide their proof-theoretic strengths in terms of classical theories, and we especially prove that $\mathsf{CKM}$ and full Second-Order Arithmetic have the same proof-theoretic strength. 
\end{abstract}

\maketitle

\section{Introduction}
There are two ways to formulate constructive versions of Zermelo-Frankel set theory $\ZF$, namely \emph{Intuitionistic Zermelo-Frankel set theory} $\IZF$ and \emph{Constructive Zermelo-Frankel set theory} $\CZF$.
Their strengths and metamathematical properties are broadly studied by Aczel, H. Friedman, Rathjen, and others. 
While $\ZF$ is a first-order set theory--the only objects in $\ZF$ are sets--there is a natural demand for `large collections' that are not formally sets.
These objects are called \emph{classes}, and \emph{second-order set theories} like \emph{G\"odel-Bernays set theory} $\GB$ and \emph{Kelley-Morse set theory} $\KM$ supports classes as objects.

It is natural to ask whether we have a constructive version of second-order set theories.
The main goal of this paper is to formulate four possible second-order set theories named $\IGB$, $\IKM$, $\CGB$, and $\CKM$, and to provide their proof-theoretic strength. $\IGB$ and $\CGB$ are $\GB$-like second-order set theories relating to $\IZF$ and $\CZF$, respectively. (See \autoref{Section: Set Theory} for a precise definition.) Since $\GB$ is conservative over $\ZF$, we may expect the same between $\IGB$ and $\IZF$, and $\CGB$ and $\CZF$ respectively. It turns out that a proof-theoretic argument for the conservativity of $\GB$ over $\ZF$ carries over (cf. \cite[Problem \S16.11]{Takeuti1987}), so we obtain the following result:
\begin{proposition}\pushQED{\qed}
    $\IGB$ is conservative over $\IZF$ and $\CGB$ is conservative over $\CZF$. As a corollary, $\IGB$ and $\IZF$ have the same proof-theoretic strength, and the same holds for $\CGB$ and $\CZF$. \qedhere 
\end{proposition}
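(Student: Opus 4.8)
The plan is to adapt, to intuitionistic logic, the syntactic argument that $\GB$ is conservative over $\ZF$ (the one indicated in \cite{Takeuti1987}). The feature that makes everything work is that the class-existence principle of $\IGB$ and $\CGB$ is \emph{predicative}: comprehension yields a class $\{x : \varphi(x)\}$ only for formulas $\varphi$ whose quantifiers are all set quantifiers, free class parameters being permitted but bound class quantifiers not. Because of this, the first-order schemata of $\IZF$ and $\CZF$ (restricted separation, the collection principles, set induction) are captured by finitely many axioms about classes, and every class a derivation can name is, in the end, \emph{definable} from sets together with a few class parameters.

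First I would present $\IGB$ and $\CGB$ as two-sorted sequent calculi over the single-succedent intuitionistic calculus $\LJ$, with the set-theoretic axioms and the predicative comprehension instances $\Rightarrow \exists X\,\forall x\,(x \in X \leftrightarrow \varphi(x))$ adjoined as nonlogical initial sequents. I would then prove a free-cut-elimination theorem: every derivation transforms into one whose only cuts are on subformulas of the nonlogical initial sequents. Gentzen's Hauptsatz holds for $\LJ$, and the standard bookkeeping for adding nonlogical axioms goes through essentially unchanged; the single-succedent discipline introduces no new obstruction. Predicativity is essential here, since the absence of bound class quantifiers in comprehension keeps the class formulas occurring in the reduced derivation bounded in logical complexity, so that the subformula property becomes usable.

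Second, given a free-cut-free derivation of a first-order sequent $\Gamma \Rightarrow \delta$ (in the language of $\IZF$, resp.\ $\CZF$, hence with no class variables in the endsequent), I would eliminate the classes by the predicative-comprehension translation: a class variable is read as a placeholder for a set-formula, so that $\exists X$ and $\forall X$ range schematically over the finitely many defining formulas the derivation actually uses; substituting each class variable by its defining formula collapses every comprehension initial sequent to a logically valid instance of $\varphi \leftrightarrow \varphi$ while leaving the first-order axioms intact. What remains is a derivation of $\Gamma \Rightarrow \delta$ using only $\IZF$ (resp.\ $\CZF$) axioms, which is the desired conservativity. The corollary then follows at once: since $\IZF \subseteq \IGB$ and $\CZF \subseteq \CGB$, conservativity means each pair proves precisely the same first-order sentences, so in particular they are equiconsistent and share the same proof-theoretic strength.

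The step I expect to be the main obstacle is the class-elimination in the presence of genuine class quantifiers, not merely free class parameters: one must verify that universally quantified class axioms can be replaced by their definable instances without disturbing intuitionistic validity, which is exactly where predicativity and the subformula property have to be combined with care. If the syntactic route proves troublesome, a fallback is the semantic one — expanding every Kripke model of $\IZF$ (resp.\ $\CZF$) to a model of $\IGB$ (resp.\ $\CGB$) by adjoining the definable classes at each node and invoking Kripke completeness — but I would prefer the proof-theoretic argument, since conservativity is itself a syntactic assertion.
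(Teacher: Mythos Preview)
Your proposal is correct and matches the paper's approach: the paper gives no proof beyond the remark that the proof-theoretic conservativity argument for $\GB$ over $\ZF$ in \cite[Problem \S16.11]{Takeuti1987} carries over to the intuitionistic setting, and your outline is precisely a fleshing-out of that remark. In fact you supply considerably more detail than the paper does, and the points you flag (free-cut elimination in $\LJ$, replacing class variables by their first-order defining formulas so that the second-order class axioms collapse to instances of the corresponding first-order schemata) are exactly the ones that need checking.
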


Notice that $\KM$ is also a second-order extension of $\ZF$, as it is obtained from $\GB$ by adding class comprehension rules for arbitrary second-order formulas. $\KM$ is a non-conservative extension of $\ZF$ since $\KM$ proves the consistency of $\ZF$. We can expect the same for the constructive analogues $\KM$, in other words, $\IKM$ has a higher proof-theoretic strength than $\IZF$, and the same between $\CKM$ and $\CZF$.
We can see that the double-negation translation between $\IZF$ and $\ZF$ can carry over a second-order setting. We may directly work with H. Friedman's two-stage double-negation translation via non-extensional set theories, or forcing with the double-negation topology adopted in \cite{JeonMatthews2022}.
The same machinery works between $\IKM$ and $\KM$, which shows the proof-theoretic strength of $\IKM$:
\begin{corollary}\pushQED{\qed}
    The proof-theoretic strength of $\IKM$ is that of $\KM$. \qedhere 
\end{corollary}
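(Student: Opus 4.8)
The plan is to establish the two inequalities of proof-theoretic strength separately. The trivial direction is that $\KM$ is at least as strong as $\IKM$: since $\KM$ is obtained from $\IKM$ by adding the law of excluded middle (equivalently, by passing to full classical logic), every axiom and every inference of $\IKM$ remains available in $\KM$, so $\IKM \subseteq \KM$, and in particular any arithmetic consequence of $\IKM$ is already a consequence of $\KM$. All the content therefore lies in the reverse direction, namely that $\IKM$ proves every arithmetic sentence that $\KM$ does; this is what one extracts from a double-negation translation.

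First I would set up the negative (G\"odel--Gentzen) translation $\phi \mapsto \phi^{\neg\neg}$ for the two-sorted language of $\KM$, inserting a double negation in front of the atomic formulas (both $\in$ between sets and the membership of a set in a class) and translating $\lor$ and $\exists$ (over sets and over classes alike) via their De Morgan duals, exactly as in the first-order case but now also commuting the negations past the class quantifiers. The goal is the soundness lemma: if $\KM \vdash \phi$ then $\IKM \vdash \phi^{\neg\neg}$. Following the remark in the introduction, I would route this through H. Friedman's two-stage argument --- first double-negation-translate $\KM$ into a non-extensional intuitionistic second-order theory, then eliminate non-extensionality --- or, equivalently, interpret $\KM$ inside $\IKM$ by forcing with the double-negation topology as in \cite{JeonMatthews2022}, lifting that construction from the first-order $\IZF$/$\ZF$ setting to the second-order one.

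The hard part will be verifying the soundness lemma on the axioms that are genuinely second-order, above all the full class-comprehension schema of $\KM$. Its negative translation asserts the existence of the class $\{x : \phi^{\neg\neg}(x)\}$, and since $\phi^{\neg\neg}$ is again an arbitrary second-order formula, this should be handed back by the impredicative comprehension of $\IKM$ --- but one must check that the resulting class behaves correctly under the translated membership relation and, crucially, that extensionality of classes survives. Double negation is exactly what can break extensionality, which is why the non-extensional intermediate theory (or the quotient by the double-negation-stable equality in the forcing model) is needed; confirming that the impredicative comprehension of $\IKM$ is strong enough to reconstruct extensional classes after translation is the delicate step, and it is where the second-order case departs from the routine first-order translation.

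Finally I would observe that the negative translation is the identity, up to intuitionistic equivalence, on arithmetic sentences of low complexity, so that soundness yields in $\IKM$ every arithmetic (in particular every consistency) statement that $\KM$ proves; if one wants outright $\Pi^0_2$-conservativity one additionally applies Friedman's $A$-translation to absorb the double negations on the $\Sigma^0_1$ matrix. Combining this reverse direction with the trivial inclusion $\IKM \subseteq \KM$ gives that $\IKM$ and $\KM$ prove the same arithmetic sentences, hence have the same proof-theoretic strength, as claimed.
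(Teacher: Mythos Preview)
Your proposal is correct and follows essentially the same approach as the paper: the paper's own argument is simply the remark preceding the corollary, namely that H. Friedman's two-stage double-negation translation via non-extensional set theories, or equivalently the forcing with the double-negation topology from \cite{JeonMatthews2022}, carries over from the $\IZF$/$\ZF$ setting to the second-order $\IKM$/$\KM$ setting. Your write-up is in fact more explicit than the paper's, which leaves the details entirely to the cited machinery.
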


What can we say about the proof-strength of $\CKM$? It obviously lies between $\CZF$ and $\KM$, but even the gap between $\CZF$ and $\ZF$ is very large: It is known by \cite{Rathjen1993} that the proof-theoretic strength of $\CZF$ is equal to that of $\KP$, and $\ZF$ proves not only the consistency of $\KP$ but its strengthenings like $\KP$ with large countable ordinals like admissible ordinals. The main result in this paper is the optimal proof-theoretic strength of $\CKM$ in terms of well-known classical theories:
\begin{theorem*}
    The proof-theoretic strength of $\CKM$ is equal to that of full Second-Order Arithmetic.
\end{theorem*}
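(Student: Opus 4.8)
The plan is to prove the two directions separately. Write $\mathsf{Z}_2$ for full (classical) second-order arithmetic, the two-sorted theory whose number sort ranges over $\omega$, whose set sort ranges over subsets of $\omega$, and which includes the comprehension scheme for every formula of the language. I would show that $\CKM$ interprets $\mathsf{Z}_2$, giving the lower bound, and that $\mathsf{Z}_2$ constructs an inner (realizability) model of $\CKM$, giving the upper bound. Since both interpretations respect the natural numbers, they also match the relevant arithmetic strength, so together they force the two theories to have the same proof-theoretic strength.

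For the lower bound I would factor the interpretation through the intuitionistic version $\mathsf{Z}_2^i$, i.e.\ full second-order arithmetic over intuitionistic logic. The G\"odel--Gentzen negative translation embeds classical $\mathsf{Z}_2$ into $\mathsf{Z}_2^i$: the translation is compatible with comprehension, since a witness for $\{n : \neg\neg\phi(n)\}$ is again an instance of comprehension, and number equality is decidable, so the first-order part becomes classical automatically. It then remains to interpret $\mathsf{Z}_2^i$ in $\CKM$ by the evident reading in which numbers are the elements of the set $\omega$ supplied by $\CZF$ and second-order variables range over subclasses of $\omega$. Number induction and the arithmetic axioms hold because $\omega$ is the least inductive set, and the essential point is that the \emph{impredicative} class comprehension of $\CKM$ --- the Morse--Kelley schema, rather than the predicative G\"odel--Bernays one --- supplies exactly the full comprehension of $\mathsf{Z}_2^i$: for any class-formula $\phi$ the subclass $\{n \in \omega : \phi(n)\}$ exists. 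Composing these yields an interpretation of $\mathsf{Z}_2$ in $\CKM$.

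For the upper bound I would build, inside $\mathsf{Z}_2$, a realizability model of $\CKM$ in the style of McCarty and Rathjen \cite{Rathjen1993}. Sets are interpreted by names coded as well-founded trees, hence by reals, with realizability-membership and realizability-equality defined by recursion along the countable ranks of these names; full comprehension is what lets $\mathsf{Z}_2$ define this recursion together with the relation $a \Vdash \phi$. Because $\CZF$ has no power set and all of its sets are constructively subcountable, the names can be kept at the level of reals and the construction stays within the reach of $\mathsf{Z}_2$, so that verifying the $\CZF$-axioms under realizability is a transcription of the known verification. The classes of $\CKM$ are then interpreted by realizability predicates --- second-order formulas with real parameters --- and the Morse--Kelley class comprehension of $\CKM$ matches precisely the fact that the realizer relation $a \Vdash \phi$ attached to an arbitrary class-formula $\phi$ is an arbitrary second-order formula, whose comprehension is guaranteed by the $\Pi^1_\infty$-comprehension of full $\mathsf{Z}_2$.

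The hard part will be this upper bound, and within it the reconciliation of the class ontology of $\CKM$ with the meagre two-sorted language of $\mathsf{Z}_2$: a class is a priori a collection of sets, hence --- once sets are coded by reals --- a third-order object, which cannot literally be an element of a model internal to $\mathsf{Z}_2$. The realizability reading is what dissolves this, by representing each class through its defining formula rather than as a completed collection, so that impredicative class comprehension turns into the impredicative comprehension of $\mathsf{Z}_2$; making this rigorous --- checking that the realizability relation for unrestricted class-formulas is genuinely definable in $\mathsf{Z}_2$, and that all class axioms of $\CKM$ (class extensionality, class comprehension, and the replacement- and choice-type principles of the theory) are realized --- is where the substantive work lies. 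A secondary, more technical hurdle is to confirm that the name hierarchy really fits inside $\mathsf{Z}_2$, that is, that every transfinite recursion used to set up the realizability universe runs only along well-orderings that $\mathsf{Z}_2$ proves to be well-founded.
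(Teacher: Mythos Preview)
Your lower bound is correct and matches the paper's argument exactly: interpret intuitionistic $\mathsf{Z}_2$ in $\CKM$ by reading second-order variables as subclasses of $\omega$ (Full Comprehension supplies the comprehension scheme), then recover classical $\mathsf{Z}_2$ by the negative translation.

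The upper bound, however, has a real gap in the treatment of classes. You propose to interpret a $\CKM$-class as ``a second-order formula with real parameters,'' so that a class quantifier $\forall^1 X$ ranges over pairs (formula, parameter). The obstruction is Full Comprehension: $\CKM$ forms classes from formulas $\phi$ that themselves contain class quantifiers, nested to arbitrary depth. Under your reading, the clause for $e\Vdash \forall^1 X\,\psi(X)$ must quantify over all $\mathsf{Z}_2$-formulas $\sigma$ and assert $e\Vdash \psi[\sigma/X]$; but the latter relation depends on the syntactic shape of $\sigma$ (each occurrence of $t\in X$ unfolds to $\sigma(t)$), so expressing it uniformly in $\sigma$ demands a satisfaction predicate for arbitrary $\mathsf{Z}_2$-formulas inside $\mathsf{Z}_2$, which Tarski's theorem forbids. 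Put differently: once $\CKM$-sets are coded by reals (all well-founded trees), $\CKM$-classes are genuinely third-order, and replacing them by their defining formulas does not collapse the order when those formulas may themselves quantify over classes. Your sketch would go through for $\CGB$ (Elementary Comprehension only), but not for $\CKM$.

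The paper avoids this by not interpreting $\CKM$ in $\mathsf{Z}_2$ directly. It passes through an intermediate \emph{first-order} theory $T=\CZF+\text{full Separation}+$ ``there is a transitive set $M$ satisfying second-order $\CZF$,'' and observes that inside $T$ the structure $(M,\mathcal P(M))$ models $\CKM$. The point is that in $T$ the $\CKM$-set universe is a \emph{set} $M$, so $\CKM$-classes are ordinary first-order objects (subsets of $M$), and Full Comprehension for $\CKM$ reduces to full Separation in $T$; class quantifiers become set quantifiers. Lubarsky's realizability interpretation of $\CZF+\text{full Separation}$ in $\mathsf{Z}_2$ is then extended to realize $T$: the required $M$ is built as a specific well-founded tree using an internal Martin-L\"of--style type structure over Kleene's first algebra (following Aczel's type-theoretic interpretation of $\CZF$), so that the immediate subtrees of $M$ --- the $\CKM$-sets --- are indexed by \emph{natural numbers} rather than by arbitrary reals. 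This is exactly the idea your sketch is missing: one must first shrink the set-universe of $\CKM$ to something whose elements carry natural-number names, so that its classes become reals instead of sets of reals.
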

Its proof follows from a modification of Lubarsky's proof \cite{Lubarsky2006SOA} of the equiconsistency between full Second-Order Arithmetic and $\CZF$ with the full Separation. 

Before starting the main content, we will briefly overview the structure of the paper. In \autoref{Section: Set Theory}, we review a historical account and basic notions of constructive set theory, then define the constructive versions of second-order set theories. In \autoref{Section: Prelim}, we introduce preliminary notions necessary to establish the proof-theoretic strength of $\CKM$. In \autoref{Section: Strength of CKM}, we prove that the proof-theoretic strength of $\CKM$ is that of full Second-Order Arithmetic.

\section*{Acknowledgement}
The initial idea of this paper stemmed from the final report for the proof theory class taught by James Walsh, and the author would like to thank him for allowing the author to write the final report about constructive set theory. The author also would like to thank Robert Lubarsky for helpful comments, and Justin Moore for improving my writing.

\section{Constructive set theory}
\label{Section: Set Theory}
In this section, let us briefly review constructive set theories, and define constructive second-order set theories.
The standard references for constructive set theory would be \cite{AczelRathjen2001} and \cite{AczelRathjen2010}, but these might be hard to read for unmotivated readers. Thus we try to include a historical account and explanations for notions appearing in $\CZF$, which might help the reader to work on constructive set theory.

\subsection{Historical account}
Various constructive systems appeared after Bishop's paramount `Constructive analysis' \cite{BishopBridges1985}, which showed constructive mathematics is not remote from classical mathematics. Unfortunately, Bishop formulated his concepts informally and he did not present any foundational system for his analysis, although he remained some remnants of some criteria that his system must satisfy. For example, he mentioned in his book \cite{BishopBridges1985} that the Axiom of Choice follows from `by the very meaning of the existence.'%
\footnote{The author learned later that this sentence can be misleading; Bishop claimed we have a choice \emph{operation} that may not respect the equality, and not a choice function. See \cite{McCartyShapiroKlev2023Choice} for a more discussion related to the axiom of choice in constructive mathematics.}
Many consequent attempts at formulating a foundation for Bishop's system appeared, including H. Friedman's Intuitionistic Zermelo-Frankel set theory, Feferman's Explicit Mathematics, Myhill's $\CST$, or Martin-L\"of's type theory.

Since the set theory successfully settled the foundation crisis of mathematics, it would be natural to consider the constructive analogue of set theory as a foundation. The standard set theory for classical mathematics is \emph{Zermelo-Frankel set theory} $\ZF$, so the constructive analogue of set theory should resemble $\ZF$. But classical $\ZF$ is deemed to be \emph{impredicative} due to unrestricted quantification in the axiom of Separation and Powerset, which allows us to construct objects of a certain rank by referring to objects of higher ranks.\footnote{\cite{Myhill1973IZF} and \cite{Crosilla2000ST} claimed Powerset is impredicative because it does not give any rule to generate elements of a power set of a given set, and it instead assumes the universe of all sets and collects all possible subsets of the given set. See \cite{Crosilla2000ST} for a more discussion about the impredicativity of Separation and Powerset.}
However, going back to Bishop's constructive analysis that was an initial motivation for constructive foundations, Petrakis (\cite{Petrakis2020Habilitation}, Chapter 1) stated that ``... Bishop elaborated a version of dependent type theory with one universe, in order to formalise $\mathsf{BISH}$.''
That is, Bishop tried to formulate his constructive analysis over a type theory, which is deemed as predicative, so a set-theoretic foundation for constructive analysis should be predicative.
Then what is the constructive predicative set theory resembling $\ZF$? Myhill \cite{Myhill1973IZF} and H. Friedman \cite{Friedman1973Intuitionistic} studied a constructive version of set theory now called \emph{Intuitionistic Zermelo-Frankel set theory} $\IZF$. It turns out by H. Friedman \cite{Friedman1973} that the proof-theoretic strength of $\IZF$ is equal to that of $\ZF$, meaning that $\IZF$ is highly impredicative. This rules out $\IZF$ as a predicative foundation for constructive analysis.
Myhill \cite{Myhill1975} defined a version of `predicative' constructive set theory named $\CST$ as an attempt to provide a foundation for Bishop's constructive analysis \cite{BishopBridges1985}. However, Myhill's formulation is not quite close to that of $\ZF$: for example, $\CST$ contains atoms for natural numbers. 
Years later, Aczel formulated \emph{Constructive Zermelo-Frankel set theory} $\CZF$ and its type-theoretic interpretation in Martin-L\"of type theory by modifying the type-theoretic interpretation of ordinals that appeared in the thesis of his doctoral student Leversha \cite{LevershaPhD}.
$\CZF$ is more $\ZF$-like than $\CST$, and it can be interpreted in (Martin-L\"of's) type theory (See \cite{Aczel1978}, \cite{Aczel1982}, \cite{Aczel1986}, or \cite{Wehr2021}), so $\CZF$ has an reduction to a predicative theory. Therefore, $\CZF$ satisfies the mentioned criteria for the set-theoretic foundation for constructive analysis. 
$\CZF$ with Choice is an impredicative theory whose consistency strength is far higher than that of the full second-order arithmetic,\footnote{By Diaconescu's theorem, Choice over $\CZF$ implies Excluded middle for $\Delta_0$-formulas, which implies Powerset. Rathjen proved in \cite{Rathjen2012Power} that $\CZF$ with powerset is equiconsistent with Power Kripke-Platek set theory, which is stronger than the full second-order arithmetic.} 
but $\CZF$ with weaker forms of Choice like \emph{Countable Choice} or \emph{Dependent Choice} can still be regarded predicative, which are known to be enough to develop the foundations of real analysis. (See \cite{Aczel1982} or \cite{Wehr2021} for the type-theoretic interpretation of $\CZF$ with Countable Choice or Dependent Choice.) For a detailed discussion of historical aspects of constructive set theories, the reader might refer to \cite{Crosilla2000ST}.

\subsection{First-order set theory}
Let us start with the definition of $\IZF$ that is closer to that of $\ZF$ than $\CZF$:
\begin{definition}
    $\IZF$ is an intuitionistic first-order theory over the language $\langle\in\rangle$ with the following axioms: Extensionality, Pairing, Union, Infinity, $\in$-Induction, Separation, Collection, and Powerset.
\end{definition}
Unlike $\ZF$, $\IZF$ uses Collection instead of Replacement as an axiom because there seems no good way to derive the full strength of $\ZF$ from $\IZF_R$, $\IZF$ with Replacement instead of Collection.
In fact, H. Friedman and \v{S}\v{c}edrov proved in \cite{FriedmanScedrov1985} that $\IZF$ has more provably recursive functions than $\IZF_R$. Thus $\IZF_R$ is `weaker' than $\IZF$ in some sense, although it is open whether $\IZF$ proves the consistency of $\IZF_R$.

$\CZF$ appeared first in Aczel's paper \cite{Aczel1978} about the type-theoretic interpretation of set theory.
\begin{definition}
    $\CZF$ is an intuitionistic first-order theory over the language $\langle\in\rangle$ with the following axioms: Extensionality, Pairing, Union, Infinity, $\in$-Induction with
    \begin{itemize}
        \item $\Delta_0$-Separation: $\{x\in a\mid \phi(x,p)\}$ exists if $\phi$ is a bounded formula.
        \item Strong Collection: If $\forall x\in a\exists y\phi(x,y)$, then there is $b$ such that
        \begin{equation*}
            \forall x\in a\exists y\in b \phi(x,y)\text{ and } \forall y\in b\exists x\in a\phi(x,y).
        \end{equation*}
        \item Subset Collection: For any $a$ and $b$, we can find $c$ such that for any $u$, if $\forall x\in a\exists y \in b \phi(x,y,u)$, then we can find $d\in c$ such that
        \begin{equation*}
            \forall x\in a\exists y\in d \phi(x,y,u)\text{ and } \forall y\in d\exists x\in a\phi(x,y,u).
        \end{equation*}
    \end{itemize}
\end{definition}
The formulation of $\CZF$ may appear to be a bit strange since the choice of axioms is less motivated (especially, Subset Collection). The following notions could be helpful in approaching these two axioms:
\begin{definition}
    Let $A$ and $B$ be classes.
    \begin{itemize}
        \item A relation $R\subseteq A\times B$ is a \emph{multi-valued function} if $\forall x\in A\exists y \langle x,y\rangle\in R$. In this case, we use the notation $R\colon A\rrarrows B$ and we call $A$ a \emph{domain of $R$}.
        \item If $R\colon A\rrarrows B$ further satisfies $\forall y\in B\exists x\in A \langle x,y\rangle\in R$, then we call $B$ a \emph{subimage of $R$} and use the notation $R\colon A\lrlrarrows B$.
    \end{itemize}
\end{definition}
In the $\CZF$-world, multi-valued functions are more fundamental than functions and subsets. The notion of subsets is associated with power sets, which are viewed as impredicative. But we may ask why functions are not `fundamental' like multi-valued functions. The reason comes from the type-theoretic interpretation of $\CZF$.
When we interpret Strong Collection over the type interpretation, for example, we extract a choice function from the statement $\forall x\in a\exists y \phi(x,y)$. However, the choice function $f$ we get is \emph{not necessarily} extensional (or put differently, $f$ is an operation and not a function): The choice `function' $f$ depends on the `type-theoretic expression' of sets $x\in a$. We can still form a set-sized `mapping' $G=\{\lag x,f(x)\rag \mid x\in a\}\subseteq \{\lag x,y\rag\mid x\in a\land \phi(x,y)\}$, but $G$ as a graph is no longer functional since a given set $x\in a$ may have different type-theoretic expressions, and $f(x)$ depends on that expression. Thus $G$ ends up being a multi-valued function. A similar situation also appears around interpreting Subset Collection.

We can see that if $R$ is functional, that is, for each $x$ there is at most one $y$ such that $\langle x,y\rangle\in R$, then a subimage must be an image of $R$. However, unlike images, subimages of a given multi-valued function need not be unique.
Under this terminology, we can formulate Strong Collection and Subset Collection as variations of Replacement and Powerset in $\ZF$. For Strong Collection, let us recall that Replacement states

\begin{displayquote}
For every set $a$ and a class function $F\colon a\to V$, $F$ has an image $b$ of $F$.
\end{displayquote}

By replacing `function' and `image' with `multi-valued function' and `subimage,' we get the statement of Strong Collection:

\begin{displayquote}
For every set $a$ and a class \emph{multi-valued function} $F\colon a\rrarrows V$, $F$ has a \emph{subimage} $b$ of $F$.
\end{displayquote}

The case for Subset Collection requires more work because it requires functional formulation for Powerset.
The reason we take a functional formulation instead of a usual formulation of Powerset is the axiom of Powerset itself is impredicative as Myhill \cite{Myhill1975} pointed out. On the other hand, the notion of functions is not impredicative since ``a mapping or function is a rule, a finite object which can actually be given'' according to Myhill \cite[p351]{Myhill1975}.

\begin{lemma}[$\ZF$ without Powerset] \label{Lemma: Sect2-Powerset-00}
    Powerset is equivalent to the following claim: for given sets $a$ and $b$ we can find a set $c$ such that if $f\colon a\to b$ is any (set or class) function, then $c$ contains the image of $f$.
\end{lemma}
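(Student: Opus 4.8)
The plan is to prove the equivalence in both directions, working in $\ZF$ without Powerset as the ambient theory.

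For the forward direction, assume Powerset. Given sets $a$ and $b$, the set of all functions $f\colon a\to b$ is a subset of $\mathcal{P}(a\times b)$, which exists since $a\times b$ is a set (products being available without Powerset, via Separation applied to $\mathcal{P}(\mathcal{P}(a\cup b))$ or directly) and Powerset gives $\mathcal{P}(a\times b)$. Using Separation, I would carve out
\begin{equation*}
    F = \{f \in \mathcal{P}(a\times b) \mid f \text{ is a function from } a \text{ to } b\},
\end{equation*}
and then set $c = \bigcup_{f\in F}\ran(f)$, which exists by Union and Replacement. By construction $c$ contains the image of every set function $f\colon a\to b$. The only subtlety is class functions: if $f\colon a\to b$ is a class function, its restriction to the set $a$ is actually a set (by Replacement applied to $a$), hence an element of $F$, so its image is contained in $c$ as well. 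Thus the same $c$ works for class functions too.

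For the converse, assume the stated claim and derive Powerset. Given a set $a$, I want to construct $\mathcal{P}(a)$. The idea is that every subset $u\subseteq a$ is the preimage-structure of a function into a two-element set $b=\{0,1\}$ (its characteristic function), so subsets of $a$ are coded by functions $a\to 2$. Applying the claim to $a$ and $b=2$ yields a set $c$ containing the image of every function $a\to 2$; but what I actually need is a set containing every such function, not every image. The cleaner route is to apply the claim in a form that captures the functions themselves: for each subset $u\subseteq a$, consider the function $f_u\colon a\to \mathcal{P}(a)$-style encoding is circular, so instead I use $g_u\colon u\to\{a\}$ or more simply observe that a subset $u$ is recovered from its characteristic function $\chi_u\colon a\to 2$. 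I would apply the claim to the pair $(a,2)$ and then use the resulting $c$ together with Separation and Replacement to collect all characteristic functions into a set $\Phi$, from which $\mathcal{P}(a) = \{\chi^{-1}(\{1\}) \mid \chi\in\Phi\}$ follows by Replacement.

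The main obstacle is the converse direction, specifically bridging the gap between ``$c$ contains every \emph{image}'' and ``we can collect every \emph{function} (equivalently, every subset)'' into a set. Since the claim as stated only guarantees a bound on images, I expect to need a trick to make the functions themselves set-bounded: the standard device is to replace the codomain $b$ by something whose range encodes the whole function, for instance taking the codomain to be $a\times 2$ and the function $u\mapsto\{\lag x,1\rag : x\in u\}\cup\{\lag x,0\rag : x\in a\setminus u\}$, whose image over all subsets then has union containing enough data to reconstruct $\mathcal{P}(a)$ via Separation. Verifying that this encoding is faithful and that the resulting collection is genuinely a set (rather than a proper class) using only Separation, Union, and Replacement over the bounding set $c$ is the delicate bookkeeping I would need to carry out carefully; once that is in place, Powerset follows immediately by a final application of Replacement.
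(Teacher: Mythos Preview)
Your approach matches the paper's: the forward direction is immediate from Powerset, and for the reverse you arrive (after some detours) at exactly the paper's device, namely taking codomain $a\times 2$ and encoding each $u\subseteq a$ by the function $f_u(x)=\langle x,1\rangle$ if $x\in u$ and $\langle x,0\rangle$ otherwise.

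Two small corrections. In the forward direction your choice $c=\bigcup_{f\in F}\ran(f)$ does not work as written: this $c$ is a subset of $b$, so each $\ran f$ is a \emph{subset} of $c$, not an \emph{element}. The simplest fix is $c=\mathcal{P}(b)$, which makes the direction a one-liner (this is why the paper just says it is ``clear''); your construction of $F$ is then unnecessary. In the reverse direction you have the right device but stop short of the finish: once the claim (applied with $b=a\times 2$) yields a set $c$ with $\ran f_u\in c$ for every $u\subseteq a$, simply apply Replacement to $c$ via $v\mapsto\{x\in a:\langle x,1\rangle\in v\}$. Every element of the resulting set is visibly a subset of $a$, and every $u\subseteq a$ appears (take $v=\ran f_u$), so this set \emph{is} $\mathcal{P}(a)$. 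No further ``delicate bookkeeping'' is required.
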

\begin{proof}
    The forward direction is clear. For the other direction, let $c$ be the set containing all images of functions from $a$ to $a\times 2$. For each $u\subseteq a$, consider the function whose existence is guaranteed by Replacement:
    \begin{equation*}
        f_u(x)=\begin{cases}
        \langle x,1\rangle & \text{if }x\in u,\\
        \langle x,0\rangle & \text{if }x\notin u.
        \end{cases}
    \end{equation*}
    Then $\ran f_u = (a\setminus u)\times \{0\} \cup u\times \{1\}$.
    Hence the set
    \begin{equation*}
        \{ \{x\in a : \langle x,1\rangle\in u\} \mid u\in c\},
    \end{equation*}
    which exists by Replacement, contains all subsets of $a$.
\end{proof}

Since the set of all images of functions $f\colon a\to b$ is an example of a collection of definable class functions, we can further strengthen the previous lemma as follows:
\begin{lemma}[$\ZF$ without Powerset]
    Powerset is equivalent to the following statement: for given sets $a,b$ and a class family of classes $\{F_u \mid u\in V\}$ defined by $F_u = \{\lag x,y\rag \mid\phi(x,y,u)\}$,
    we can find a set $c$ such that if $F_u\colon a\to b$ is a function, then $\ran F_u\in c$.
\end{lemma}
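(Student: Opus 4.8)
The plan is to deduce this more general statement from the previous lemma (\autoref{Lemma: Sect2-Powerset-00}) rather than from Powerset directly, using the fact that it is an intermediate principle that is both implied by the last lemma's characterization and implies Powerset. First I would observe that the forward direction is immediate: assuming Powerset, for given $a$ and $b$ the collection of all functions from $a$ to $b$ is a set (being a subset of $\mathcal{P}(a\times b)$, which exists), so the collection of their ranges is a set by Replacement, and any such set $c$ works regardless of the defining formula $\phi$.

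For the reverse direction, the key point is that the statement in question, specialized to a convenient formula $\phi$, recovers exactly the characterization in \autoref{Lemma: Sect2-Powerset-00}. So I would fix sets $a$ and $b$, and I would exhibit a single parametrized class family $\{F_u \mid u\in V\}$ such that, as $u$ ranges over $V$, the functions $F_u\colon a\to b$ enumerate (at least) all honest functions from $a$ to $b$. The natural choice is to let $\phi(x,y,u)$ assert $\lag x,y\rag\in u$, so that $F_u = \{\lag x,y\rag\mid \lag x,y\rag\in u\}$ is just $u$ itself reinterpreted as a graph; then whenever $u$ happens to code a function from $a$ to $b$, the family delivers $\ran F_u = \ran u\in c$. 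Applying the hypothesis to this family yields a set $c$ containing the range of every function from $a$ to $b$, which is precisely the conclusion of \autoref{Lemma: Sect2-Powerset-00}; invoking that lemma then gives Powerset.

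The main subtlety to get right — and the step I would treat most carefully — is matching the quantifier structure. In \autoref{Lemma: Sect2-Powerset-00} the target set $c$ must contain the image of \emph{every} function $f\colon a\to b$ simultaneously, whereas the new statement produces, for a \emph{fixed} definable family, a single $c$ catching all ranges $\ran F_u$ that happen to be functional. The reason these align is exactly that one definable family with a free parameter $u$ can sweep out all set functions at once, so quantifying over the parameter $u\in V$ inside the fixed family reproduces the universal quantifier over functions. I would emphasize that this is why a class family \emph{of} classes, rather than a single class function, is the right level of generality: it lets a single instance of the principle encode the whole powerset characterization. Once this correspondence is spelled out, the equivalence follows by combining the two directions with \autoref{Lemma: Sect2-Powerset-00}, and no further calculation is needed.
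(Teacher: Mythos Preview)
Your proposal is correct and follows essentially the same approach as the paper: the forward direction is immediate from Powerset, and for the converse you specialize to the family $F_u=\{\lag x,y\rag\mid \lag x,y\rag\in u\}$ (equivalently, $\{F_u\mid u\in V\}=\{f\mid f\colon a\to b\}$) and then invoke \autoref{Lemma: Sect2-Powerset-00}. The paper's proof is a terse two-line version of exactly this argument.
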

\begin{proof}
    The forward direction is clear.
    The converse direction follows from \autoref{Lemma: Sect2-Powerset-00} since we can take $\{F_u\mid u\in V\}=\{f\mid f\colon a\to b\}$.
\end{proof}

By modifying the above lemma, we can notably get the statement of Subset Collection:
\begin{displayquote}
    For given sets $a,b$ and a class family of classes $\{F_u \mid u\in V\}$ defined by $F_u = \{\lag x,y\rag \mid\phi(x,y,u)\}$,
    we can find a set $c$ such that if $F_u\colon a\rrarrows b$ is a \emph{multi-valued function}, then $c$ contains a \emph{subimage} of $F_u$.
\end{displayquote}

One expected consequence of Subset Collection is that it implies the \emph{function set} exists:
\begin{proposition}
    $\CZF$ proves for each sets $a$ and $b$, ${}^ab=\{f\mid f\colon a\to b\}$ exists.
\end{proposition}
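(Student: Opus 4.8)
The plan is to derive exponentiation from Subset Collection, in the functional ``family'' form stated above, together with $\Delta_0$-Separation. The naive attempt---applying Subset Collection to $a$ and $b$ directly---only bounds the \emph{ranges} of functions $a\to b$, and recovering the functions themselves from the collection of their ranges would force us back to Powerset, which is exactly what we are trying to avoid. The key trick is therefore to apply Subset Collection to $a$ and $a\times b$ (which exists in $\CZF$) rather than to $a$ and $b$, choosing the defining relation so that its subimages recover entire function \emph{graphs} rather than mere ranges.

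Concretely, first I would apply Subset Collection to the sets $a$ and $a\times b$ with the family $F_u = \{\langle x,z\rangle \mid z\in u \land \exists y\, z=\langle x,y\rangle\}$, obtaining a set $c$ such that whenever $F_u\colon a\rrarrows (a\times b)$ is a multi-valued function, $c$ contains a subimage of $F_u$. Next, for a fixed function $f\colon a\to b$, take $u=f$ and observe that $F_f$ is the single-valued, total map $x\mapsto \langle x,f(x)\rangle$ from $a$ into $a\times b$; in particular $F_f\colon a\rrarrows (a\times b)$, so $c$ contains some subimage $d$ of $F_f$. The crucial step is to identify $d$: since $F_f$ is functional, its only subimage is its image (as already observed above for functional relations), so $d=\ran F_f = \{\langle x,f(x)\rangle \mid x\in a\}=f$. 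Hence every function $f\colon a\to b$ is itself an \emph{element} of $c$, i.e.\ ${}^a b\subseteq c$.

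Finally I would extract the function set as ${}^a b = \{f\in c \mid f \text{ is a function from } a \text{ to } b\}$ by $\Delta_0$-Separation, after noting that ``$f$ is a function from $a$ to $b$'' is a bounded formula: its quantifiers can be made to range over $f$, $a$, $b$, and $a\times b$, expressing that $f\subseteq a\times b$, that $f$ is total on $a$, and that $f$ is single-valued.

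The main obstacle is conceptual rather than computational: recognizing that one must feed $a\times b$, not $b$, into Subset Collection so that the subimage operation pins down the full graph of each $f$ instead of only its range. Everything else---the existence of $a\times b$, the identification of the subimage of a functional relation with its image, and the boundedness of the defining predicate---is routine.
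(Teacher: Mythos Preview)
Your proposal is correct and essentially identical to the paper's proof: both apply Subset Collection to $a$ and $a\times b$ with the family $F_u=\{\langle x,\langle x,y\rangle\rangle \mid \langle x,y\rangle\in u\}$ (the paper writes $u\cap(a\times b)$ in place of $u$, a cosmetic difference), observe that for $u=f\colon a\to b$ the relation $F_f$ is a genuine function whose unique subimage is $f$ itself, and then carve out ${}^ab$ from the resulting $c$ by $\Delta_0$-Separation.
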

\begin{proof}
    Consider the following class families $\{F_u\mid u\in V\}$ defined as follows: 
    \begin{equation*}
        F_u=\{\langle x,\langle x,y\rangle\rangle \mid \langle x,y\rangle\in u\cap (a\times b) \}.
    \end{equation*}
    We can see that if $u$ is a function from $a$ to $b$, then $F_u$ is a function from $a$ to $a\times b$ and the image of $F_u$ is $u$.
    Now let $c$ be a set of subimages of $\{F_u\mid u\in V,\, F_u\colon a\rrarrows b\}$. If $u\colon a\to b$ is a function, then it is the image of $F_u$, which is the unique subimage of $F_u$ since $F_u$ is a function. Hence $u\in c$. Therefore, by Bounded Separation, we have
    \begin{equation*}
        {}^ab=\{u\in c\mid u\colon a\to b\}. \qedhere 
    \end{equation*}
\end{proof}

However, it is known that $\CZF$ does not prove the existence of a powerset: Working over $\CZF$ with Powerset, we can construct $V_{\omega+\omega}$, which is a model of Intuitionistic Zermelo set theory $\mathsf{IZ}$. Furthermore, \cite{GambinoThesis} showed that $\mathsf{IZ}$ interprets classical Zermelo set theory by Friedman's double-negation interpretation. On the other hand, \cite{Rathjen1993} proved that the proof-theoretic strength of $\CZF$ is equal to that of $\KP$, which is strictly weaker than that of Zermelo set theory.
Also, the proof-theoretic strength of $\CZF$ with Powerset in terms of classical theories is provided by Rathjen \cite{Rathjen2012Power}: He showed that the proof-theoretic strength of $\CZF$ with Powerset is equal to a Power-Kripke set theory $\KP(\mathcal{P})$, $\KP$ with Separation and Collection for $\Delta_0^\mathcal{P}$-formulas, which is strictly stronger than Zermelo set theory. For the precise definition of terminologies, see \cite{Rathjen2012Power}.

\subsection{Second-order set theory}
We will define two types of second-order set theories as we classically did. There are two second-order extensions of $\ZF$, namely \emph{G\"odel-Bernays set theory} $\GB$ and \emph{Kelly-Morse set theory} $\KM$. Some literatures may use different names (like $\mathsf{NBG}$ for $\GB$, $\mathsf{MK}$ for $\KM$) but we follow terminologies in \cite{WilliamsPhD}. The main difference between $\GB$ and $\KM$ is while $\GB$ cannot use second-order formulas to formulate new classes, $\KM$ can. 

The following definition defines the constructive analogue of $\GB$:
\begin{definition}
    Second-order set theories are defined over a two-sorted language, each will mean sets and classes. We use $\forall^0$ and $\exists^0$ to quantify sets, and $\forall^1$ and $\exists^1$ to quantify classes.

    Axioms of $\CGB$ are first-order axioms of $\CZF$ with the following second-order axioms:
    \begin{itemize}
        \item Class Extensionality: two classes are equal if they have the same set members. Formally,
			\begin{equation*}
				\forall^1 X,Y [X=Y\lr \forall^0x (x\in X \lr x\in Y)].
			\end{equation*}
			
		\item Elementary Comprehension: if $\phi(x,p,C)$ is a first-order formula with a class parameter $C$, then there is a class $A$ such that $A=\{x\mid \phi(x,p,C)\}$. Formally,
			\begin{equation*}
				\forall^0p\forall^1C\exists^1 A \forall^0x [x\in A\lr \phi(x,p,C)].
			\end{equation*}
			
		\item Class Set Induction: if $A$ is a class, and if we know a set $x$ is a member of $A$ if every element of $x$ belongs to $A$, then $A$ is the class of all sets. Formally,
			\begin{equation*}
				\forall^1A \big[ [\forall^0 x(\forall^0y\in x (y\in A)\to x\in A)]\to \forall^0x (x\in A) \big].
			\end{equation*}
			
		\item Class Strong Collection: if $R$ is a class multi-valued function from a set $a$ to the class of all sets, then there is a set $b$ which is an `image' of $a$ under $R$. Formally,
			\begin{equation*}
				\forall^1 R\forall^0a[R \colon a\rrarrows V \to \exists^0b(R \colon a\lrlrarrows b)].
			\end{equation*}
	\end{itemize}

    Axioms of $\IGB$ are obtained from $\CGB$ by adding axioms of $\IZF$ with the following axiom:
    \begin{itemize}
        \item Class Separation: for any class $A$ and a set $a$, $A\cap a$ is a set. Formally,
        \begin{equation*}
            \forall^1 A \forall^0 a \exists^0 b \forall^0 x [x\in b \lr (x\in A\land x\in a)]
        \end{equation*}
    \end{itemize}
    and putting Class Collection instead of Class Strong Collection:
    \begin{itemize}
        \item Class Collection: if $R$ is a class multi-valued function from a set $a$ to the class of all sets, then there is a set $b$ such that $a$ `projects' to $b$ by $R$. Formally,
			\begin{equation*}
				\forall^1 R\forall^0a[R \colon a\rrarrows V \to \exists^0b(R \colon a\rrarrows b)].
			\end{equation*}
	\end{itemize}
\end{definition}
The reader should be cautioned that Elementary Comprehension does not tell anything about forming sets. That is, $\{x\in a\mid \phi(x)\}$ is a class in general unless $\phi$ is $\Delta_0$. Also, as $\IZF$ implies Strong Collection, $\IGB$ also proves Class Strong Collection from Class Collection:
\begin{lemma}
    $\IGB$ proves Class Strong Collection.
\end{lemma}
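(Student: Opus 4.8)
The plan is to mimic the standard derivation of Strong Collection from Collection in $\IZF$, where the key extra ingredient is Separation. Here the role of Separation is played by Class Separation, since the relation $R$ is a class and hence cannot be fed to the first-order Separation scheme inherited from $\IZF$; that scheme ranges only over first-order formulas, whereas our pruning condition mentions $R$ essentially.

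First I would fix a class multi-valued function $R\colon a\rrarrows V$, so that $\forall^0 x\in a\,\exists^0 y\,\langle x,y\rangle\in R$. Applying Class Collection to $R$ yields a set $b_0$ with $R\colon a\rrarrows b_0$, that is, $\forall^0 x\in a\,\exists^0 y\in b_0\,(\langle x,y\rangle\in R)$. This $b_0$ need not be a subimage, because it may contain elements unrelated to any point of $a$; the remaining task is to prune away those spurious elements.

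Next I would isolate the points actually hit by $R$ from $a$. The formula $\exists^0 x\in a\,(\langle x,y\rangle\in R)$ is a first-order formula in the free variable $y$ with set parameter $a$ and class parameter $R$, so Elementary Comprehension produces a class
\[
A=\{y\mid \exists^0 x\in a\,(\langle x,y\rangle\in R)\}.
\]
Class Separation then turns $A\cap b_0$ into a set; put $b=A\cap b_0$. Finally I would verify $R\colon a\lrlrarrows b$. For the domain condition, given $x\in a$ choose $y\in b_0$ with $\langle x,y\rangle\in R$; then $y\in A$, witnessed by this very $x$, so $y\in A\cap b_0=b$. For the subimage condition, any $y\in b$ lies in $A$, which by definition says $\exists^0 x\in a\,(\langle x,y\rangle\in R)$. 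Hence $b$ is a subimage of $R$, establishing Class Strong Collection.

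I do not expect a genuine obstacle in this argument; the only point requiring care is that the pruning step must be justified by Class Separation (equivalently, by Elementary Comprehension together with Class Separation) rather than by the first-order Separation of $\IZF$, precisely because the defining condition refers to the class $R$. This is exactly the analogue of the classical observation that the passage from Collection to Strong Collection relies on Separation.
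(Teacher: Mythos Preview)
Your proposal is correct and follows essentially the same route as the paper: apply Class Collection to obtain a set $b_0$ with $R\colon a\rrarrows b_0$, then prune it via Class Separation to $b=\{y\in b_0\mid \exists x\in a\,\langle x,y\rangle\in R\}$, yielding $R\colon a\lrlrarrows b$. Your extra care in explicitly invoking Elementary Comprehension before Class Separation is fine but not strictly necessary, since Class Separation as formulated in the paper directly intersects any class with a set.
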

\begin{proof}
    It follows the usual proof for Strong Collection over $\IZF$: Suppose that $R\colon a\rrarrows V$ is a multi-valued function and let $b$ be a set such that $R\colon a\rrarrows b$. Then take $c=\{y\in b\mid \exists x\in a (\lag x,y\rag\in R)\}$, which exists by Class Separation. Then $R\colon a\lrlrarrows c$.
\end{proof}

The constructive analogues $\CKM$ and $\IKM$ of $\KM$ are defined from $\CGB$ and $\IGB$ by adding a stronger class formation rule:
\begin{definition}
    $\CKM$ and $\IKM$ are obtained from $\CGB$ and $\IGB$ by adding the following axiom:
    \begin{itemize}
        \item Full Comprehension: if $\phi(x,p,C)$ is a \emph{second-order} formula with a class parameter $C$, then there is a class $A$ such that $A=\{x\mid \phi(x,p,C)\}$.
    \end{itemize}
\end{definition}

Also, the following alternative form of Class Collection eliminates the need to define ordered pairs for its formulation. It is useful when we work with formulas directly, so we want to avoid additional complexity introduced by the definition of ordered pairs.
\begin{proposition}[$\IGB$ without Class Collection]
\label{Proposition: ClassStrongCollection-noOP}
    Class Collection is equivalent to the following claim: for any formula $\phi(x,y,p, A)$ with no class quantifiers,
    \begin{equation*}
		\forall^1 A\forall^0p\forall^0a[ (\forall^0x\in a\exists^0y \phi(x,y,p,A))\to \exists^0b(\forall^0x\in a\exists^0y\in b \phi(x,y,p,A))].
	\end{equation*}
\end{proposition}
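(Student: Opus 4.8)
The plan is to read the proposed statement as nothing more than the instance of Class Collection one obtains when the multi-valued function is presented by a formula rather than as an abstract class, and to move between the two presentations using Elementary Comprehension. Both implications then reduce to unwinding the abbreviation $R\colon a\rrarrows b$, which stands for $\forall^0 x\in a\,\exists^0 y\in b\,(\lag x,y\rag\in R)$.

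For the forward direction I would assume Class Collection and fix a formula $\phi(x,y,p,A)$ with no class quantifiers, together with $A$, $p$, $a$ satisfying $\forall^0 x\in a\,\exists^0 y\,\phi(x,y,p,A)$. Since $\phi$ has no class quantifiers and mentions only the single class parameter $A$ (the extra set parameters $a,p$ being harmless, as the comprehension scheme accommodates finitely many set parameters), and since the relation $z=\lag x,y\rag$ is $\Delta_0$, the defining condition below is first-order with class parameter $A$; thus Elementary Comprehension produces the class $R=\{z\mid \exists^0 x\in a\,\exists^0 y\,(z=\lag x,y\rag\land\phi(x,y,p,A))\}$. By construction $R\subseteq a\times V$, and the hypothesis on $\phi$ says exactly $R\colon a\rrarrows V$. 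Applying Class Collection yields a set $b$ with $R\colon a\rrarrows b$, and unwinding the definition of $R$ turns this into $\forall^0 x\in a\,\exists^0 y\in b\,\phi(x,y,p,A)$, as required.

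For the converse I would assume the displayed scheme and recover Class Collection. Given a class $R$ and a set $a$ with $R\colon a\rrarrows V$, I take the class parameter to be $A:=R$, the set parameter $p$ to be arbitrary (it will not occur), and the formula to be $\phi(x,y,p,A):=(\lag x,y\rag\in A)$, which plainly has no class quantifiers. Then $R\colon a\rrarrows V$ unwinds to $\forall^0 x\in a\,\exists^0 y\,(\lag x,y\rag\in R)$, i.e. the antecedent $\forall^0 x\in a\,\exists^0 y\,\phi(x,y,p,A)$, so the scheme supplies a set $b$ with $\forall^0 x\in a\,\exists^0 y\in b\,\phi(x,y,p,A)$, which is precisely $R\colon a\rrarrows b$. (If the convention demands a genuine multi-valued function $R\subseteq a\times V$, one first replaces $R$ by $R\cap(a\times V)$, again a class by Elementary Comprehension, which changes neither side.)

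I do not expect a serious obstacle; the entire content lies in the two translations above. The one point genuinely demanding care is the bookkeeping about comprehension: in the forward direction it is essential that $\phi$ carry no class quantifiers, so that $R$ is formed by Elementary Comprehension alone and the equivalence really holds over $\IGB$ rather than only over $\IKM$. The remaining subtleties—the dummy parameter $p$, the $\subseteq a\times V$ convention, and the $\Delta_0$-definability of the ordered-pair relation—are routine and are handled exactly as indicated.
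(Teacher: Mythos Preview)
Your proposal is correct and follows essentially the same approach as the paper: for the forward direction you form the class $R=\{\langle x,y\rangle\mid x\in a\land\phi(x,y,p,A)\}$ by Elementary Comprehension and apply Class Collection, and for the converse you instantiate the scheme with $\phi(x,y,p,A)\equiv(\langle x,y\rangle\in A)$ taking $A:=R$. Your write-up is more detailed (and rightly flags the need for $\phi$ to be free of class quantifiers so that Elementary Comprehension suffices), but the underlying argument is identical to the paper's two-line proof.
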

\begin{proof}
    For the left-to-right, consider $R=\{\langle x,y\rangle \mid x\in a\land \phi(x,y,p,A)\}$. Then $R\colon a\rrarrows V$, then apply the Class Collection. For the right-to-left, observe that the formula $\langle x,y\rangle \in R$ is a formula without class quantifiers.
\end{proof}
Similarly, we can formulate the following form of Strong Class Collection over $\CGB$:
\begin{proposition}[$\CGB$ without Strong Class Collection] \pushQED{\qed}
    Strong Class Collection is equivalent to the following claim: for any formula $\phi(x,y,p, A)$ with no class quantifiers,
    \begin{equation*}
		\forall^1 A\forall^0p\forall^0a[ (\forall^0x\in a\exists^0y \phi(x,y,p,A))\to \exists^0b(\forall^0x\in a\exists^0y\in b \phi(x,y,p,A) \land \forall^0y\in b\exists^0x\in a \phi(x,y,p,A))]. \qedhere 
	\end{equation*}
\end{proposition}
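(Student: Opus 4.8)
The plan is to reproduce the proof of \autoref{Proposition: ClassStrongCollection-noOP} essentially verbatim, the only new feature being that the \emph{subimage} relation $R\colon a\lrlrarrows b$ carries one extra clause. Concretely, unfolding the definition, $R\colon a\lrlrarrows b$ abbreviates
\begin{equation*}
    \forall^0 x\in a\,\exists^0 y\in b\,(\lag x,y\rag\in R)\quad\text{and}\quad\forall^0 y\in b\,\exists^0 x\in a\,(\lag x,y\rag\in R),
\end{equation*}
and these two conjuncts are exactly what the displayed formula asserts once $\lag x,y\rag\in R$ is replaced by $\phi$. So the whole argument is a matter of translating between the relational presentation and the formula presentation.

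For the left-to-right direction I would assume Strong Class Collection, fix a class $A$, a set parameter $p$, a set $a$, and a formula $\phi(x,y,p,A)$ without class quantifiers satisfying $\forall^0 x\in a\,\exists^0 y\,\phi(x,y,p,A)$. Since $\phi$ has no class quantifiers, the formula $\exists x\exists y\,(z=\lag x,y\rag\land x\in a\land\phi(x,y,p,A))$ is first-order with the single class parameter $A$, so Elementary Comprehension produces the class
\begin{equation*}
    R=\{\lag x,y\rag \mid x\in a\land \phi(x,y,p,A)\}.
\end{equation*}
The hypothesis gives $R\colon a\rrarrows V$, and Strong Class Collection yields a set $b$ with $R\colon a\lrlrarrows b$. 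Unfolding $\lag x,y\rag\in R$ as $x\in a\land\phi(x,y,p,A)$ in the two displayed conjuncts above recovers precisely $\forall^0 x\in a\,\exists^0 y\in b\,\phi(x,y,p,A)$ together with $\forall^0 y\in b\,\exists^0 x\in a\,\phi(x,y,p,A)$.

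For the converse I would assume the displayed claim and fix a class $R$ and a set $a$ with $R\colon a\rrarrows V$. Applying the claim with class parameter $A\coloneqq R$ and with $\phi(x,y,p,A)$ taken to be $\lag x,y\rag\in A$---a formula with no class quantifiers---the antecedent $\forall^0 x\in a\,\exists^0 y\,\phi$ is just the domain condition packaged in $R\colon a\rrarrows V$, and the conclusion delivers a set $b$ satisfying both subimage clauses, i.e.\ $R\colon a\lrlrarrows b$. I expect no genuine obstacle here: the only points to verify are that $R$ is legitimately formed by Elementary Comprehension and that $\lag x,y\rag\in A$ has no class quantifiers, both of which are immediate. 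This is exactly why the statement can be recorded with its proof suppressed, in parallel with \autoref{Proposition: ClassStrongCollection-noOP}.
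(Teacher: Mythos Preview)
Your proposal is correct and follows exactly the argument the paper intends: the paper suppresses this proof entirely (marking it with \verb|\qedhere|) because it is the obvious variant of the preceding \autoref{Proposition: ClassStrongCollection-noOP}, whose proof forms $R=\{\lag x,y\rag\mid x\in a\land\phi(x,y,p,A)\}$ via Elementary Comprehension for one direction and observes that $\lag x,y\rag\in R$ has no class quantifiers for the other. You have simply written out that analogue with the extra subimage conjunct carried along, which is precisely what the paper expects the reader to do.
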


\section{Other preliminary notions}
\label{Section: Prelim}
In the remaining part of the paper, we will provide a way to reduce $\CKM$ into Second-Order Arithmetic. The main idea is tweaking Lubarsky's interpretation defined in \cite{Lubarsky2006SOA}.
The main idea of Lubarsky's proof is coding sets as well-founded trees combined with Kleene's first realizability algebra. We will not review Lubarsky's proof in full detail, but we still review the necessary details in his paper.

\subsection{Second-Order Arithmetic}
In this paper, we work over Second-Order Arithmetic unless specified. But what is Second-Order Arithmetic? Usually, we characterize Second-Order Arithmetic with first-order Peano arithmetic with Induction and Comprehension for any second-order formulas. In this paper, we additionally assume choice axioms over Second-Order Arithmetic. Before defining the choice schemes, let us mention that we denote $(Y)_i = \{m\mid (i,m)\in Y\}$ to mention a coded family of sets.

\begin{definition}
    $\mathsf{\Sigma^1_\infty\mhyphen AC}$ is the following assertion for any second-order formula $\phi(n,X)$:
    \begin{equation*}
        \forall n\exists X\ \phi(n,X)\to\exists Z \forall n \phi(n,(Z)_n).
    \end{equation*}
    $\mathsf{\Sigma^1_\infty\mhyphen DC}$ is the following assertion for each second-order formula $\phi(n,X,Y)$:
    \begin{equation*}
        \forall n\forall X \exists Y\ \phi(n,X,Y)\to \exists Z\forall n\ \phi(n,(Z)^n,(Z)_n),
    \end{equation*}
    where $(Z)^n = \{(i,m)\in Z \mid i<n\}$
\end{definition}
It is known that adding $\mathsf{\Sigma^1_\infty\mhyphen AC}$ and $\mathsf{\Sigma^1_\infty\mhyphen DC}$ to full Second-Order Arithmetic does not change its proof-theoretic strength. For its proof, see Theorem VII.6.16 of \cite{Simpson2009}. On the other hand, \cite{FriedmanGitmanKanovei2019} showed that  $\mathsf{\Sigma^1_\infty\mhyphen AC}$ does not suffice to prove a fragment of $\mathsf{\Sigma^1_\infty\mhyphen DC}$.

\subsection{Finite sequences and trees}
It is known that we can code a finite sequence of natural numbers into a single natural number, and we denote a code of the sequence $a_0,a_1,\cdots,a_n$ as $\vec{a}=\lag a_0,a_1,\cdots,a_n\rag$. For a finite sequence $\vec{a}=\lag a_0,a_1,\cdots,a_n\rag$, we denote its length as $\len(\vec{a})$, and its restriction $\lag a_0,\cdots,a_{k-1}\rag$ as $\vec{a}\restricts k$.
For two finite sequences $\vec{a}$ and $\vec{b}$, we denote their concatenation as $\vec{a}^\frown \vec{b}$, whose length is $\len\vec{a}+\len\vec{b}$.

Now we can code trees in terms of sets of finite sequences with certain conditions:
\begin{definition}
    A set $S$ is a tree if it satisfies the following conditions:
    \begin{enumerate}
        \item $\lag\rag\in S$,
        \item If $\vec{a}\in S$ and $k\le\len(\vec{a})$, then $\vec{a}\restricts k\in S$.
    \end{enumerate}
\end{definition}

For a natural number $n$ and a tree $S$, we define $S\downarrow n$ by the set $\{\vec{a}\mid \lag n\rag^\frown \vec{a}\in S\}$. In general, if $\vec{a}$ is a finite sequence of natural numbers, we define $S\downarrow \vec{a}$ by $\{\vec{b}\mid \vec{a}^\frown\vec{b}\in S\}$. Also, $\immd(S)$ is the set of all immediate successors of the top node $\lag\rag$ of $S$, that is,
\begin{equation*}
    \immd(S) = \{a \mid \lag a\rag\in S\}.
\end{equation*}
A tree $S$ is \emph{well-founded} if it has no infinite path over $S$, that is,
\begin{equation*}
    \lnot\exists f \forall n[\lag f(0),\cdots, f(n)\rag\in S].
\end{equation*}
We denote that tree $S$ is well-founded as $\WF(S)$. It is known that full Second-Order Arithmetic with the help of $\mathsf{\Sigma^1_\infty\mhyphen DC}$ satisfies $\mathsf{\Pi^1_\infty\mhyphen TI}$, which is equivalent over full Second-Order Arithmetic to the assertion that all well-founded relations satisfy transfinite induction for arbitrary second-order formulas.
\begin{proposition}[$\mathsf{\Pi^1_\infty\mhyphen TI}$] \pushQED{\qed} \label{Proposition: CKM-Sect1-TIinfty}
    Second-Order Arithmetic proves the following: If $\prec$ is a well-founded relation over the domain $D$, and if $\phi(x)$ is a second-order formula, then the following holds:
    \begin{equation*}
        \forall x\in D[(\forall y\prec x \phi(y)) \to\phi(x)]\to \forall x\in D\ \phi(x) \qedhere 
    \end{equation*}
\end{proposition}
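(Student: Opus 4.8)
The plan is to argue by contradiction, using $\mathsf{\Sigma^1_\infty\mhyphen DC}$ to manufacture an infinite $\prec$-descending sequence out of a putative counterexample. Assume the inductive premise $\forall x\in D\,[(\forall y\prec x\,\phi(y))\to\phi(x)]$ and suppose the conclusion fails, so that $\neg\phi(x_0)$ holds for some $x_0\in D$. Taking the contrapositive of the premise (we freely use classical logic, since we work over Second-Order Arithmetic) yields
\[
\forall x\in D\,[\neg\phi(x)\to\exists y\prec x\,\neg\phi(y)],
\]
so from any point at which $\phi$ fails one can step to a $\prec$-smaller point at which $\phi$ still fails.

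Next I would package this into a single instance of $\mathsf{\Sigma^1_\infty\mhyphen DC}$ whose output codes such a sequence. Encoding a natural number as a set (say as a singleton), I would define a second-order predicate $\psi(n,X,Y)$ expressing: if $n=0$ then $Y$ codes $x_0$; and if $n>0$ then, whenever the most recent slice of the history $X$ codes some $z\in D$ with $\neg\phi(z)$, the set $Y$ codes some $y\prec z$ with $y\in D$ and $\neg\phi(y)$, while $Y=\emptyset$ otherwise, so that the clause is met vacuously on ill-formed histories. The displayed contrapositive supplies the hypothesis $\forall n\,\forall X\,\exists Y\,\psi(n,X,Y)$ of dependent choice: the base case is witnessed by $x_0$, and at successor stages either the history is well-formed and the step above produces a witness, or it is not and $Y=\emptyset$ works. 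Dependent choice then returns a $Z$ with $\forall n\,\psi(n,(Z)^n,(Z)_n)$.

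It remains to read off the chain and close the argument. By induction on $n$ I would prove that $(Z)_n$ codes a number $f(n)\in D$ with $\neg\phi(f(n))$ and, for $n\ge 1$, $f(n)\prec f(n-1)$; this induction is legitimate because full Second-Order Arithmetic has induction for arbitrary second-order formulas, and the inductive statement (which mentions $\neg\phi$) is itself second-order. The resulting $f$ is an infinite $\prec$-descending sequence through $D$, contradicting the well-foundedness of $\prec$, whence $\forall x\in D\,\phi(x)$.

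I expect the main obstacle to be bookkeeping in the dependent-choice step rather than any real difficulty: one must phrase $\psi$ so that its existential premise holds for \emph{every} history (including junk ones), encode the number-valued choices as the set-valued choices that $\mathsf{\Sigma^1_\infty\mhyphen DC}$ actually delivers, and verify that recovering a genuine chain from $Z$ needs only numeric induction on second-order formulas. I note that if ``well-founded'' is instead read in the minimal-element sense, then full Comprehension yields the class $\{x\in D:\neg\phi(x)\}$ directly and a $\prec$-minimal element of it contradicts the premise, bypassing $\mathsf{\Sigma^1_\infty\mhyphen DC}$; which route is required depends on the ambient definition of well-foundedness.
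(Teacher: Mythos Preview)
Your argument is correct: contrapose the inductive premise, feed the resulting one-step descent into $\mathsf{\Sigma^1_\infty\mhyphen DC}$ (with the usual vacuity trick on ill-formed histories so that the $\forall X\,\exists Y$ premise holds unconditionally), and use arithmetical induction for second-order formulas to read off an infinite $\prec$-descending chain, contradicting well-foundedness. Your closing remark is also apt: with the minimal-element reading of well-foundedness, full Comprehension alone already does the job.

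As for comparison with the paper: there is nothing to compare. The paper does not give a proof of this proposition at all; it is stated with a bare \qedhere, having been introduced just above as a known consequence of $\mathsf{\Sigma^1_\infty\mhyphen DC}$ over full Second-Order Arithmetic (the nearby citation to Simpson's book is the implicit reference). What you have written is exactly the standard derivation that the paper is gesturing at, so you have supplied the omitted details rather than taken a different route.
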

Now let $S$ be a well-founded tree and for $\vec{a},\vec{b}\in S$, define $\vec{a}\prec_S \vec{b}$ if $\vec{a}$ is an immediate successor of $\vec{b}$, that is, $\exists n (\vec{a}=\vec{b}^\frown\lag n\rag)$. Then $\prec_S$ forms a well-founded relation over $S$, so we have the following:
\begin{corollary}\pushQED{\qed} \label{Corollary: CKM-Sect1-TreeTI}
    Suppose that $S$ is a well-founded tree and $\phi(x)$ is a second-order statement. If we have
    \begin{equation*}
        \forall \vec{a}\in S [\forall \vec{a}^\frown \lag n\rag \in S\ \phi( \vec{a}^\frown \lag n\rag)]\to \phi(\vec{a}),
    \end{equation*}
    then we have $\forall\vec{a}\in S\phi(\vec{a})$.
    \qedhere 
\end{corollary}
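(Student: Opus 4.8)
The plan is to obtain the corollary as a direct instance of \autoref{Proposition: CKM-Sect1-TIinfty}, taking the domain to be $D=S$ and the well-founded relation to be the immediate-successor relation $\prec_S$ on $S$. Since \autoref{Proposition: CKM-Sect1-TIinfty} already delivers transfinite induction for an arbitrary well-founded relation and an arbitrary second-order $\phi$, essentially all that remains is to check that $\prec_S$ is well-founded and that the two progressiveness hypotheses coincide under this choice.

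First I would confirm that $\prec_S$ admits no infinite descending sequence. Suppose $\vec{b}_0\succ_S\vec{b}_1\succ_S\cdots$ were such a sequence. By the definition of $\prec_S$ each step appends a single entry, so $\vec{b}_{i+1}=\vec{b}_i^\frown\lag n_i\rag$ for suitable $n_i$, and hence the $\vec{b}_i$ are nested one inside another as initial segments, all lying in $S$. Their union therefore determines a function $f$ with $\lag f(0),\dots,f(k)\rag\in S$ for every $k$ (each such restriction equals $\vec{b}_i$ for $i$ large enough, which belongs to $S$ as $S$ is downward closed). This is an infinite path through $S$, contradicting $\WF(S)$. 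Note that $f$ is read off directly from the already-given descending sequence, so no appeal to choice is needed at this step; the principle $\mathsf{\Sigma^1_\infty\mhyphen DC}$ is rather what underwrites \autoref{Proposition: CKM-Sect1-TIinfty} itself.

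Second, I would observe that the progressiveness hypotheses match verbatim. For a fixed $\vec{a}\in S$, the $\prec_S$-predecessors of $\vec{a}$ are exactly the immediate successors $\vec{a}^\frown\lag n\rag$ belonging to $S$, so the antecedent $\forall y\prec_S\vec{a}\ \phi(y)$ of \autoref{Proposition: CKM-Sect1-TIinfty} is literally the clause $\forall\,\vec{a}^\frown\lag n\rag\in S\ \phi(\vec{a}^\frown\lag n\rag)$ appearing in the hypothesis of the corollary. Thus the assumption of the corollary is precisely the instance $\forall x\in D[(\forall y\prec x\ \phi(y))\to\phi(x)]$ of the proposition at $D=S$ and $\prec=\prec_S$, and its conclusion $\forall x\in S\ \phi(x)$ is exactly the desired $\forall\vec{a}\in S\ \phi(\vec{a})$. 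Since both steps are mere unwindings of the definitions of $\prec_S$ and of well-foundedness, there is no substantial obstacle; the only point needing a line of care is the passage from a descending $\prec_S$-chain to an infinite branch of $S$ above, which is a purely arithmetical manipulation of codes.
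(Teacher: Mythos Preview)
Your proposal is correct and follows exactly the approach the paper intends: the paper defines $\prec_S$ just before the corollary, asserts it is well-founded, and then states the corollary with a bare \qed, leaving the application of \autoref{Proposition: CKM-Sect1-TIinfty} implicit. Your write-up simply spells out the two points the paper takes for granted—that an infinite $\prec_S$-descending chain yields an infinite branch of $S$, and that the progressiveness clauses coincide—so there is no substantive difference.
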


Also, we can prove the following simultaneous induction over two well-founded trees:
\begin{corollary}\label{Corollary: CKM-Sect1-TreeTI2}
    Suppose that $S$ and $T$ are well-founded trees and $\phi(x,y)$ is a second-order statement. If we have
    \begin{equation*}
        \forall \vec{a}\in S\forall \vec{b}\in T [\forall \vec{a}^\frown \lag n\rag \in S\forall \vec{b}^\frown \lag m\rag\in T \ \phi( \vec{a}^\frown \lag n\rag,\vec{b}^\frown \lag m\rag)]\to \phi(\vec{a},\vec{b}),
    \end{equation*}
    then we have $\forall\vec{a}\in S\forall \vec{b}\in T\phi(\vec{a},\vec{b})$.
\end{corollary}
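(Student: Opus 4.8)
The plan is to reduce this two-dimensional induction to the single-tree transfinite induction of \autoref{Corollary: CKM-Sect1-TreeTI} by absorbing the second coordinate into the induction formula, rather than building a product relation from scratch. Concretely, I would fix the well-founded trees $S,T$ and the formula $\phi$, and introduce the auxiliary formula $\psi(\vec{a})$ given by $\forall \vec{b}\in T\ \phi(\vec{a},\vec{b})$, in which $T$ occurs merely as a set parameter. Since $\phi$ is second-order and $\forall \vec{b}\in T$ is a number quantifier bounded to membership in $T$, the formula $\psi$ is again second-order, so \autoref{Corollary: CKM-Sect1-TreeTI} may be applied to it over $S$. Observe that the desired conclusion $\forall \vec{a}\in S\forall \vec{b}\in T\ \phi(\vec{a},\vec{b})$ is literally $\forall \vec{a}\in S\ \psi(\vec{a})$.

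The main step is to verify the progress clause required by \autoref{Corollary: CKM-Sect1-TreeTI} for $\psi$: for every $\vec{a}\in S$, the hypothesis $\forall \vec{a}^\frown\lag n\rag\in S\ \psi(\vec{a}^\frown\lag n\rag)$ should entail $\psi(\vec{a})$. Unwinding $\psi$, the hypothesis reads $\forall \vec{a}^\frown\lag n\rag\in S\ \forall \vec{b}'\in T\ \phi(\vec{a}^\frown\lag n\rag,\vec{b}')$, while the goal is $\forall \vec{b}\in T\ \phi(\vec{a},\vec{b})$. I would fix an arbitrary $\vec{b}\in T$ and invoke the progress hypothesis of the corollary at the pair $(\vec{a},\vec{b})$: to conclude $\phi(\vec{a},\vec{b})$ it suffices to establish $\forall \vec{a}^\frown\lag n\rag\in S\ \forall \vec{b}^\frown\lag m\rag\in T\ \phi(\vec{a}^\frown\lag n\rag,\vec{b}^\frown\lag m\rag)$. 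Since each $\vec{b}^\frown\lag m\rag\in T$ is in particular an element $\vec{b}'$ of $T$, this is an immediate instance of the stated hypothesis. As $\vec{b}$ was arbitrary, $\psi(\vec{a})$ follows, and \autoref{Corollary: CKM-Sect1-TreeTI} then yields $\forall \vec{a}\in S\ \psi(\vec{a})$.

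The point I expect to require the most care is recognizing that \emph{no inner induction on $T$ is needed}: the outer induction hypothesis $\psi(\vec{a}^\frown\lag n\rag)$ already asserts $\phi$ for \emph{all} $\vec{b}'\in T$, which is strictly stronger than what the progress hypothesis consumes (only the immediate successors $\vec{b}^\frown\lag m\rag$). The boundary configurations are handled automatically by vacuous premises: if $\vec{a}$ is a leaf of $S$ or $\vec{b}$ is a leaf of $T$, then the inner universal statement over immediate successors is vacuously true, so the progress hypothesis delivers $\phi(\vec{a},\vec{b})$ directly.

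For completeness I would also note the alternative route: form the relation $\prec$ on $S\times T$ with $(\vec{a}',\vec{b}')\prec(\vec{a},\vec{b})$ iff $\vec{a}'=\vec{a}^\frown\lag n\rag\in S$ and $\vec{b}'=\vec{b}^\frown\lag m\rag\in T$, check that $\prec$ is well-founded (an infinite descending $\prec$-chain would project to infinite descending chains for $\prec_S$ and $\prec_T$, contradicting well-foundedness of $S$ and $T$), and apply \autoref{Proposition: CKM-Sect1-TIinfty} directly. The genuine obstacle on that route, which the reduction to \autoref{Corollary: CKM-Sect1-TreeTI} avoids entirely, is formalizing the well-foundedness of $\prec$ inside Second-Order Arithmetic—coding the pairs and the domain $S\times T$, defining the projection functions, and extracting the two descending chains from a putative descending $\prec$-chain—so I would favour the first approach.
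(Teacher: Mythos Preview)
Your argument is correct. The paper, however, takes precisely the alternative route you sketch at the end: it defines the product relation $\prec_S\times\prec_T$ on $S\times T$ by $(p,q)\mathrel{(\prec_S\times\prec_T)}(r,s)\iff p\prec_S r\land q\prec_T s$, asserts this is well-founded, and appeals directly to \autoref{Proposition: CKM-Sect1-TIinfty}. Your main approach---absorbing the $T$-coordinate into the formula as $\psi(\vec a):=\forall\vec b\in T\,\phi(\vec a,\vec b)$ and running a single induction on $S$ via \autoref{Corollary: CKM-Sect1-TreeTI}---is genuinely different and arguably cleaner: it sidesteps the well-foundedness verification for the product entirely and needs no coding of pairs. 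The observation that makes it work, which you correctly isolate, is that the simultaneous-successor premise in the statement is weak enough that the strengthened outer induction hypothesis (ranging over \emph{all} of $T$, not just successors of $\vec b$) already discharges it; in fact well-foundedness of $T$ is never used, so your argument establishes a slightly stronger statement in which $T$ need only be a tree. The paper's route is more symmetric in $S$ and $T$ and extends more transparently to other joint progressivity patterns, but for the corollary as stated your reduction is shorter and avoids exactly the formalization overhead you flag.
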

\begin{proof}
    Consider the relation pairwise comparison relation $\prec_S\times \prec_T$ given by 
    \begin{equation*}
        (p,q) \mathrel{(\prec_S\times \prec_T)} (r,s)\iff (p\prec_S r)\land (q\prec_T s)
    \end{equation*}
    over $S\times T$. Then we can see that $\prec_S\times \prec_T$ is a well-founded relation over $S\times T$. Thus the desired conclusion follows from \autoref{Proposition: CKM-Sect1-TIinfty}.
\end{proof}

\subsection{Kleene's first combinatory algebra}
Structures known as \emph{partial combinatory algebras} or \emph{applicative structures}  provide a way to handle generalized computable functions in an algebraic manner. 
We define \emph{partial combinatory algebras} in an axiomatic manner:
\begin{definition}
    $\mathsf{PCA}$ is the intuitionistic first-order theory defined over the language with two constant symbols $\mathbf{k}$ and $\mathbf{s}$, with a ternary relational symbol $\mathsf{App}$. We denote $\mathsf{App}(t_0,t_1,t_2)$ as $t_0t_1\simeq t_2$.

    In practice, we represent statements over $\mathsf{PCA}$ in terms of \emph{partial terms} with the symbol $\simeq$. Partial terms are (non-associative) juxtapositions of terms over $\mathsf{PCA}$, and here $t_0t_1\cdots t_n$ means $(\cdots(t_0t_1)t_2\cdots)$.
    
    We also use the following notations: If $s$ and $t$ are partial terms and $t$ is not a variable, then define
    \begin{equation*}
        s\simeq t\iff \forall y (s\simeq y\leftrightarrow t\simeq y).
    \end{equation*}
    If $a$ is a free variable, define
    \begin{equation*}
        s\simeq a \iff
        \begin{cases}
            s=a & \text{If $s$ is a term of $\mathsf{PCA}$,}\\
            \exists x\exists y [s_0\simeq x\land s_1\simeq y\land xy\simeq a] & \text{if $s$ is of the form $(s_0s_1)$.}
        \end{cases}
    \end{equation*}
    Finally, $t\downarrow$ means $\exists y (t\simeq y)$.
    Now define the axioms of $\mathsf{PCA}$ as follows:
    \begin{enumerate}
        \item $ab\simeq c_0$ and $ab\simeq c_1$ implies $c_0=c_1$,
        \item $(\mathbf{k}ab)\downarrow$ and $\mathbf{k}ab\simeq a$,
        \item $(\mathbf{s}ab)\downarrow$ and $\mathbf{s}abc \simeq ac(bc)$.
    \end{enumerate}
\end{definition}

The structure of $\mathsf{PCA}$ looks simple, but it is known that $\mathsf{PCA}$ is strong enough to express lambda terms, and enough to prove the recursion theorem:
\begin{lemma}[{\cite[\S2.2]{Beeson1985}}]
    \pushQED{\qed}
    Let $t$ be a partial term and $x$ be a variable. Then we can construct a term $\lambda x.t$ whose free variables are that of $t$ except for $x$, such that $\mathsf{PCA}$ proves $\lambda x.t\downarrow$ and $(\lambda x.t)u\simeq t[x/u]$ for any partial terms $u$. \qedhere 
\end{lemma}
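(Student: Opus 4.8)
The plan is to define the abstraction operator by recursion on the syntactic structure of $t$ and to establish both conclusions by a single parallel structural induction. Explicitly, I would set
\begin{equation*}
    \lambda x.t = \begin{cases}
        \mathbf{s}\mathbf{k}\mathbf{k} & \text{if } t \text{ is the variable } x,\\
        \mathbf{k}t & \text{if } t \text{ is a variable distinct from } x \text{ or a constant},\\
        \mathbf{s}(\lambda x.t_0)(\lambda x.t_1) & \text{if } t = t_0t_1.
    \end{cases}
\end{equation*}
These three clauses are exhaustive, since every partial term is either an atom (a variable or one of $\mathbf{k},\mathbf{s}$) or a juxtaposition. That the free variables of $\lambda x.t$ are exactly those of $t$ apart from $x$ is then immediate by induction: the variable $x$ disappears only in the first clause, and no new free variables are introduced in any clause.

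For totality I would argue $\lambda x.t\downarrow$ by induction on $t$. The first base case is an instance of axiom (3), which gives $\mathbf{s}ab\downarrow$ and in particular $\mathbf{s}\mathbf{k}\mathbf{k}\downarrow$. The second base case is an instance of axiom (2), since $\mathbf{k}a\downarrow$ and an atom denotes an element. In the inductive step the hypothesis provides elements $a,b$ with $\lambda x.t_0\simeq a$ and $\lambda x.t_1\simeq b$, so $\mathbf{s}ab\downarrow$ by axiom (3) and hence $\lambda x.(t_0t_1)\downarrow$.

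For the reduction rule $(\lambda x.t)u\simeq t[x/u]$ I would run the matching induction, treating the defining clauses and the $\mathsf{PCA}$ axioms as rewrite rules. If $t = x$, axiom (3) gives $(\mathbf{s}\mathbf{k}\mathbf{k})u\simeq \mathbf{k}u(\mathbf{k}u)$, and two uses of axiom (2) reduce the right-hand side to $u$, which is $x[x/u]$. If $t$ is an atom distinct from $x$, axiom (2) gives $(\mathbf{k}t)u\simeq t = t[x/u]$. If $t = t_0t_1$, axiom (3) gives $(\mathbf{s}(\lambda x.t_0)(\lambda x.t_1))u\simeq ((\lambda x.t_0)u)((\lambda x.t_1)u)$, and applying the induction hypothesis to each factor rewrites this to $t_0[x/u]\,(t_1[x/u]) = (t_0t_1)[x/u]$.

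The main obstacle is not the combinatorial identities but the careful handling of $\simeq$ as a partiality (Kleene) equality. Because application in a $\mathsf{PCA}$ is strict and, by axiom (1), single-valued, each rewriting step must be justified as an equivalence of definedness as well as of values: I must check that the two sides of every intermediate equation are defined under exactly the same conditions, propagating definedness through the composite terms rather than merely equating values. This is where the precise conventions governing $\simeq$ on compound terms do the real work, and it is the only place where one has to be genuinely cautious; the three structural cases are otherwise routine.
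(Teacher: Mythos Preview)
The paper does not prove this lemma; the \textsc{qed} box placed at the end of the statement signals that it is taken on citation from Beeson. Your construction is precisely the standard bracket abstraction given there, so you have supplied the omitted argument.

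One point deserves sharpening. Your final paragraph says the remaining work is to verify that both sides of each intermediate $\simeq$ are defined under the same conditions, and you describe the three structural cases as ``otherwise routine.'' But in your second base case this check actually fails for the lemma as literally stated: if $t$ is an atom not containing $x$ and $u$ is a non-denoting partial term, then $t[x/u]=t$ denotes while $(\mathbf{k}t)u$ does not, by strictness of application (unfolding the paper's definition of $\simeq$ for compound terms). No abstraction operator can repair this. The correct reading---and what Beeson actually states---is $(\lambda x.t)a\simeq t[x/a]$ for all \emph{elements} $a$ (equivalently, for denoting $u$), and your induction proves exactly that. So your proof is fine; just make explicit that the quantifier over $u$ must be restricted to denoting terms, rather than suggesting the unrestricted definedness check goes through.
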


\begin{lemma}[Recursion Theorem, {\cite[\S2.8]{Beeson1985}}]
    \pushQED{\qed}
    We can find a partial term $\mathbf{r}$ such that $\mathsf{PCA}$ proves $\mathbf{r}x\downarrow$ and $\mathbf{r}xy\simeq x(\mathbf{r}x)y$. \qedhere
\end{lemma}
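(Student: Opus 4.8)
The plan is to adapt the fixed-point combinator from the untyped $\lambda$-calculus, leaning on the preceding lemma on $\lambda$-abstraction, which supplies a total term $\lambda x.t$ with $(\lambda x.t)u\simeq t[x/u]$. The totality of abstractions is the essential lever: it is what will let me force $\mathbf{r}x\downarrow$ even though the construction involves a self-application that could otherwise diverge. Concretely, I would first set the closed (hence total) term
\[
    \mathbf{w}:=\lambda x.\lambda t.\lambda y.\,x(tt)y,
\]
so that $\mathbf{w}x\simeq\lambda t.\lambda y.\,x(tt)y$ and $\mathbf{w}x\downarrow$ by the abstraction lemma, and then define the candidate recursor
\[
    \mathbf{r}:=\lambda x.\,(\mathbf{w}x)(\mathbf{w}x),
\]
which gives $\mathbf{r}x\simeq(\mathbf{w}x)(\mathbf{w}x)$ after one more appeal to abstraction.

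Two things then remain: definedness and the recursion equation. For definedness I would unfold a single step, applying the abstraction lemma to $(\mathbf{w}x)(\mathbf{w}x)=(\lambda t.\lambda y.\,x(tt)y)(\mathbf{w}x)$ to obtain
\[
    (\mathbf{w}x)(\mathbf{w}x)\simeq\lambda y.\,x\big((\mathbf{w}x)(\mathbf{w}x)\big)y.
\]
The right-hand side is an abstraction, hence total, so $(\mathbf{w}x)(\mathbf{w}x)\downarrow$ and therefore $\mathbf{r}x\downarrow$; importantly, this does not presuppose that the self-application converges on its own, since its one-step reduct is already a value. For the recursion equation I would supply the spare argument $y$ and compute
\[
    \mathbf{r}xy\simeq(\mathbf{w}x)(\mathbf{w}x)y\simeq\big(\lambda y.\,x((\mathbf{w}x)(\mathbf{w}x))y\big)y\simeq x\big((\mathbf{w}x)(\mathbf{w}x)\big)y\simeq x(\mathbf{r}x)y,
\]
each $\simeq$ being an instance of the abstraction lemma together with $\mathbf{r}x\simeq(\mathbf{w}x)(\mathbf{w}x)$.

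The only genuine obstacle I anticipate is the definedness clause $\mathbf{r}x\downarrow$, not the equational identity. The naive combinator $Y=\lambda f.(\lambda z.f(zz))(\lambda z.f(zz))$ satisfies the right equation but offers no control over convergence in a partial algebra, where a total fixed point $\mathbf{r}x\simeq x(\mathbf{r}x)$ cannot be expected for every $x$. The design point is the inner abstraction $\lambda y$ inside $\mathbf{w}$: it delays the self-application so that $(\mathbf{w}x)(\mathbf{w}x)$ reduces to a total abstraction before any recursive unfolding occurs, which is exactly why the statement asks for $\mathbf{r}xy\simeq x(\mathbf{r}x)y$ with a spare argument rather than the stronger and generally false $\mathbf{r}x\simeq x(\mathbf{r}x)$. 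The rest is bookkeeping: tracking free variables through the substitutions and checking that each use of the abstraction lemma is licensed because the terms being substituted are themselves abstractions.
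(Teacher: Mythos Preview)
Your argument is correct and is precisely the standard construction (the Curry/Turing-style fixed-point combinator with a delayed argument) that Beeson gives in the cited reference; the paper itself supplies no proof, merely citing \cite[\S2.8]{Beeson1985}. There is nothing to compare: your proof is the one the citation points to.
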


It is known that Turing machines form a partial combinatory algebra, also known as \emph{Kleene's first algebra}. In general, for a given set $X$, the collection of all $X$-computable functions $\{e\}^X$ form a partial combinatory algebra.
\begin{proposition}\pushQED{\qed}
    For a set $X\subseteq\mathbb{N}$ and $a,b,c\in \mathbb{N}$, let us define $ab\simeq c$ if $\{a\}^X(b)=c$. Then we get a partial combinatory algebra if we appropriately interpret $\mathbf{s}$ and $\mathbf{k}$.
    Furthermore, we can interpret $\mathbf{s}$ and $\mathbf{k}$  uniform to $X$.
    \qedhere 
\end{proposition}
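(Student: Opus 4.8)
The plan is to verify the three axioms of $\mathsf{PCA}$ directly from standard facts of relativized recursion theory---the determinism of oracle machines, the relativized $s$-$m$-$n$ theorem, and the relativized universal machine theorem---while checking that every witnessing index can be chosen independently of the oracle $X$. Axiom (1) is immediate: for each $a$ the oracle machine $\{a\}^X$ computes a single-valued partial function, so $\{a\}^X(b)=c_0$ and $\{a\}^X(b)=c_1$ force $c_0=c_1$.

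For axiom (2), I would invoke the $s$-$m$-$n$ theorem, whose witnessing function is primitive recursive and makes no oracle queries, to obtain a total computable $k$ with $\{k(a)\}^X(b)=a$ for all $a,b$; that is, $k(a)$ codes the constant-$a$ function. Taking $\mathbf{k}$ to be an (oracle-free) index for $k$, totality of $k$ gives $\mathbf{k}a\simeq k(a)$, hence $(\mathbf{k}a)\downarrow$, and then $\mathbf{k}ab\simeq\{k(a)\}^X(b)=a$. For axiom (3), consider the partial $X$-computable function
\[
    g(a,b,c)\simeq \{\{a\}^X(c)\}^X(\{b\}^X(c)),
\]
whose index is obtained uniformly in $X$ from the relativized universal machine. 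Applying $s$-$m$-$n$ twice yields total computable $s_1,s_2$ with $\{s_2(a,b)\}^X(c)\simeq g(a,b,c)$ and $\{s_1(a)\}^X(b)=s_2(a,b)$; letting $\mathbf{s}$ be an index for $s_1$, totality gives $\mathbf{s}a\simeq s_1(a)$ and $\mathbf{s}ab\simeq s_2(a,b)$, so both $(\mathbf{s}a)\downarrow$ and $(\mathbf{s}ab)\downarrow$, while $\mathbf{s}abc\simeq g(a,b,c)=ac(bc)$.

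The uniformity clause then follows by inspection: the $s$-$m$-$n$ function and the universal index merely rewrite program codes and never consult $X$, so the resulting $\mathbf{k}$ and $\mathbf{s}$ are fixed natural numbers witnessing all three axioms simultaneously for every oracle $X\subseteq\mathbb{N}$. Since the ambient setting is Second-Order Arithmetic, it is worth remarking that these relativized basic theorems are provable in a weak base and are therefore available here.

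The only genuinely delicate point is the convergence of the partial applications $\mathbf{s}a$ and $\mathbf{s}ab$, which must halt even when the programs coded by $a$ or $b$ diverge. This is precisely why the construction assembles the composite code for $g$ via $s$-$m$-$n$ \emph{without} running it: the total functions $s_1,s_2$ always converge, and the only step that can diverge is the final application $\mathbf{s}abc$ once all three arguments are supplied.
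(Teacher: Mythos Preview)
Your argument is correct and is precisely the standard verification via the relativized $s$-$m$-$n$ theorem and the relativized universal machine, with the key observation that the code-manipulating functions are primitive recursive and oracle-free, so the indices $\mathbf{k}$ and $\mathbf{s}$ are fixed natural numbers working for every $X$. The paper, however, gives no proof at all: the proposition is stated with a \texttt{\textbackslash qedhere} inside the statement and no proof environment follows, so it is treated as folklore. There is therefore nothing to compare against; your write-up simply supplies the expected details, and your care about the totality of $\mathbf{s}a$ and $\mathbf{s}ab$ (ensured because $s_1,s_2$ are total $s$-$m$-$n$ outputs that do not execute the underlying programs) is exactly the point one must not gloss over.
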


It is known that we can conservatively extend $\mathsf{PCA}$ so that it includes terms for pairing functions, projections, and natural numbers. However, we only work with Kleene's first algebra, so such an extension in full generality is unnecessary for our purpose. Despite that, we should introduce some notation under the virtue of this extension for later use:
\begin{definition}
    We write $e^Xa$ instead of $\{e\}^X(a)$ if the notation does not cause any confusion. Especially, if $X=\emptyset$, we write $ea$ instead of $e^\emptyset a$.
    Also, we use $\mathbf{p}xy$, $(x)_0$, $(x)_1$ to denote a pairing function and the corresponding projection functions.
\end{definition}

For more details on $\mathsf{PCA}$, see \cite{DalenTroelstraII} or \cite{Beeson1985}.

\subsection{Type structure over Kleene's first realizability algebra}
In this subsection, we will define an internal type structure over Kleene's first realizability algebra by following \cite{Rathjen2005Brouwer}.
\begin{definition}
    Let us define the types and their elements recursively. We call a set of elements of a type $A$ its \emph{extension}, which will be denoted by $\ext(A)$.
    \begin{enumerate}
        \item $\sfN$ is a type and $\ext(\sfN) = \bbN$.
        \item For each $n\in\bbN$, $\sfN_n$ is a type and $\ext(\sfN_n) = \{k\mid k<n\}$.
        \item If $A$ and $B$ are types, then $A+B$ is a type with
        \begin{equation*}
            \ext(A+B) = \{\bfp 0x\mid x\in \ext(A)\}\cup \{\bfp 1y\mid y\in \ext(B)\}.
        \end{equation*}
        \item If $A$ is a type and for each $x\in \ext(A)$, $Fx$ is a type, where $F$ is a member of Kleene's first realizability algebra.
        Then $\prod_{x:A}Fx$ is a type with 
        \begin{equation*}
            \ext({\textstyle \prod_{x:A}Fx}) = \{f\mid \forall x\in \ext(A)\ fx\in \ext(Fx)\}.
        \end{equation*}
        \item If $A$ and $F$ satisfy the same conditions a before, $\sum_{x:A}Fx$ is a type with 
        \begin{equation*}
            \ext({\textstyle \sum_{x:A}Fx}) = \{\bfp x u\mid x\in \ext(A) \land u\in \ext(Fx)\}.
        \end{equation*}
    \end{enumerate}
\end{definition}
By a standard convention, we define $A\to B:= \prod_{x:A}B$ and $A\times B := \sum_{x:A}B$.
We define types and their extensions separately since we want to assign types natural numbers. To do this, let us assign G\"odel number for types:
\begin{definition} \label{Definition: Coding types}
    For each type $A$, let us define its G\"odel number $\ulcorner A\urcorner$ recursively as follows:
    \begin{enumerate}
        \item $\ulcorner \sfN\urcorner = (0)$.
        \item $\ulcorner \sfN_n\urcorner = (1,n)$.
        \item $\ulcorner A+B\urcorner = (2,\ulcorner A\urcorner, \ulcorner B\urcorner)$.
        \item $\ulcorner \prod_{x:A} Fx\urcorner = (3,\ulcorner A\urcorner, F)$.
        \item $\ulcorner \sum_{x:A} Fx\urcorner = (4,\ulcorner A\urcorner, F)$.
    \end{enumerate}
\end{definition}
For notational convenience, we will identify type expressions with their G\"odel numbering. That is, for example, $\prod_{x:A} Fx$ is \emph{equal} to $(3,\ulcorner A\urcorner, F)$.
Lastly, let us define a set constructor in the type theory:
\begin{definition}
    Let $\sup(A,f) = (5,A,f)$.
    $\sfV$ is defined recursively as follows: If $A$ is a (G\"odel code for a) type, and $\forall x\in \ext(A)\ fx\in \sfV$, then $\sup(A,f)\in \sfV$.
\end{definition}

\section{The Proof-theoretic strength of $\CKM$}
\label{Section: Strength of CKM}
The main goal of this section is to prove that $\CKM$ can be interpreted over the full Second-Order Arithmetic.
We will interpret an extension of $\CZF$ that can interpret $\CKM$ instead since interpreting a first-order set theory allows us to use what Lubarsky previously did in \cite{Lubarsky2006SOA}.

\subsection{An intermediate theory}
\begin{definition}
    Let $T$ be an intuitionistic first-order theory comprising the following axioms:
    \begin{enumerate}
        \item Axioms of $\CZF$ with the full Separation.
        \item There is a transitive set $M$ satisfying the second-order $\CZF$ in the following sense: 
        \begin{enumerate}
            \item $M$ is closed under union, bounded separation, and $\omega\in M$.
            \item (Second-order Strong Collection) Let $a\in M$ and $R\colon a\rrarrows M$, then there is $b\in M$ such that $R\colon a\lrlrarrows b$.
            \item (Second-order Subset Collection) Let $a,b\in M$, we can find $c\in M$ such that for every $R\colon a\rrarrows b$ (not necessarily in $M$), we can find $d\in c$ such that $R\colon a\lrlrarrows d$.
        \end{enumerate}
    \end{enumerate}
    $M$ being closed under Pairing is unnecessary since $2\in M$ and $M$ satisfying Second-order Strong Colleciton imply $M$ is closed under Pairing. (cf. \cite[Lemma 11.1.5]{AczelRathjen2010}.)
\end{definition}
That said, the theory $T$ is a combination of $\CZF$ plus the full separation and the existence of a regular set $M$ containing $\omega$ and satisfying the second-order subset collection.
Some materials call such $M$ an \emph{inaccessible set}, but the word inaccessible set may denote a stronger large set additionally satisfying the regular extension axiom. (See discussions around \cite[Definition 3.9]{JeonMatthews2022} for more detail.)

Then we can see that $M$ gives a model of $\CKM$ in the following sense:
\begin{lemma}
    Working over $T$, let $M$ be a transitive model of second-order $\CZF$. Then $(M,\mathcal{P}(M))$ is a model of $\CKM$.
\end{lemma}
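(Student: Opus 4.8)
The plan is to interpret the sets of $\CKM$ as the elements of $M$ and the classes as the members of $\mathcal{P}(M)$, the collection of all subsets of $M$; note that in $T$ this second sort is a proper class, since $\CZF$ with full Separation does not prove Powerset. The single observation that drives the whole verification is this: every class of the model is a genuine set of the ambient $T$-universe (a subset of $M$), so it may be used as a parameter in Separation; and a second-order quantifier ranging over $\mathcal{P}(M)$ is nothing but an ordinary first-order quantifier ``$\exists X\,(X\subseteq M\land\cdots)$'' in the language of $T$. Consequently the relativization $\phi^{(M,\mathcal{P}(M))}$ of any (first- or second-order) formula $\phi$ is a first-order set-theoretic formula with parameters $M$ and the relevant classes, and both comprehension schemes reduce to full Separation in $T$, while the collection schemes reduce to the second-order closure properties of $M$.

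First I would check the first-order $\CZF$ axioms inside $M$. Extensionality and $\in$-Induction are inherited because $M$ is transitive and $T$ proves $\in$-Induction for all formulas; Union, Infinity, and $\Delta_0$-Separation are the closure properties (a) of $M$, and Pairing follows from $2\in M$ together with Second-order Strong Collection as already remarked. For Strong Collection, given $a\in M$ and a first-order $\phi$ with $\forall^0 x\in a\,\exists^0 y\,\phi^M(x,y)$, full Separation produces the set $R=\{\langle x,y\rangle : x\in a,\ y\in M,\ \phi^M(x,y)\}$, a multi-valued function $R\colon a\rrarrows M$, and Second-order Strong Collection (b) delivers $b\in M$ with $R\colon a\lrlrarrows b$. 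Subset Collection is obtained the same way from property (c): for fixed $a,b\in M$ take the $c\in M$ supplied by (c); for each parameter $u\in M$ the relation $R_u=\{\langle x,y\rangle : x\in a,\ y\in b,\ \phi^M(x,y,u)\}$ is, whenever total, a multi-valued function $a\rrarrows b$, so (c) yields some $d\in c$ forming a subimage.

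Next I would verify the second-order axioms of $\CGB$. Class Extensionality holds because members of $\mathcal{P}(M)$ are determined by their elements. Class Set Induction is exactly $\in$-Induction in $T$ applied to the formula $x\in A$ with the set parameter $A\subseteq M$. Class Strong Collection is literally Second-order Strong Collection (b): a class $R\in\mathcal{P}(M)$ with $R\colon a\rrarrows M$ is a multi-valued function of the required kind. It remains to establish Full Comprehension, which also subsumes Elementary Comprehension. Given a second-order formula $\phi(x,p,C)$, a set parameter $p\in M$, and a class parameter $C\in\mathcal{P}(M)$, its relativization $\phi^{(M,\mathcal{P}(M))}$ turns every set quantifier into one bounded to $M$ and every class quantifier into ``$\exists X(X\subseteq M\land\cdots)$''; this is a genuine first-order formula of the language of $T$ with parameters $p,C,M$. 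Full Separation then produces $A=\{x\in M : \phi^{(M,\mathcal{P}(M))}(x,p,C)\}$, which is a subset of $M$ and hence an element of $\mathcal{P}(M)$, as required.

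The main obstacle---and the reason the intermediate theory is stated with full Separation rather than merely bounded Separation---is precisely this last step: one must be certain that interpreting second-order quantifiers as quantifiers over subsets of $M$ keeps them inside the first-order language of $T$, so that Full Comprehension becomes an instance of full Separation rather than something stronger. The remaining care is bookkeeping: confirming that each model-level class really is a $T$-set available as a Separation parameter, and that the extra universally quantified parameter $u$ in Subset Collection is handled uniformly by the single witness $c$ from property (c).
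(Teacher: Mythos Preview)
Your proposal is correct and follows essentially the same route as the paper: transitivity of $M$ yields Class Set Induction, the second-order closure properties (b) and (c) of $M$ yield Class Strong Collection and the first-order collection schemes, and full Separation in $T$ yields comprehension by rewriting class quantifiers as first-order quantifiers over subsets of $M$. You are simply much more explicit than the paper's terse proof, and in one respect more careful: the paper's proof only names Elementary Comprehension, whereas you correctly verify Full Comprehension (which $\CKM$ actually requires) and articulate why the relativization of a second-order formula stays first-order over $T$, so that full Separation suffices.
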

\begin{proof}
    The first part of $\CKM$ and the Class Strong Collection hold over $(M,\mathcal{P}(M))$ by the assumption. Since $M$ is transitive, every subset of $M$ satisfies Set Induction, so $(M,\mathcal{P}(M))$ satisfies Class Set Induction. Lastly, $(M,\mathcal{P}(M))$ satisfies Elementary Comprehension since $T$ proves the full Separation.
\end{proof}

Note that the model $(M,\mathcal{P}(M))$ is not a set model since $\mathcal{P}(M)$ may be a proper class. Hence $(M,\mathcal{P}(M))$ being a model of $\CKM$ does not prove the consistency of $\CKM$.

\subsection{Lubarsky's interpretation}
Lubarsky proved in \cite{Lubarsky2006SOA} that the full Second-Order Arithmetic interprets $\CZF$ with the full Separation. We will improve his result by showing his interpretation interprets $T$.

Before providing the improvement, let us review his interpretation. He interpreted sets as well-founded trees, which is akin to \emph{injectively presented realizability model} introduced in Swan's thesis \cite{SwanPhD}. 

\begin{definition}
    We distinguish bounded quantifiers and unbounded quantifiers as syntactically distinct objects. Let us define $e \Vdash \phi$ inductively as follows:
    \begin{itemize}
        \item $e\nVdash \bot$ for every $e$.
        \item $e\Vdash S\in T$ if and only if $(e)_0\in\immd(T)$ and $(e)_1\Vdash S = (e)_0^T$.
        \item $e\Vdash S=T$ if and only if for $a\in \immd(S)$, $(e)_0a\Vdash (S\downarrow a)\in T$ and for $b\in \immd(T)$, $(e)_1b\Vdash (T\downarrow b)\in S$.
        \item $e\Vdash \phi\land\psi$ if and only if $(e)_0\Vdash\phi$ and $(e)_1\Vdash \psi$.
        \item $e\Vdash \phi\lor\psi$ if and only if either $(e)_0=0$ and $(e)_1\Vdash\phi$, or $(e)_0=1$ and $(e)_1\Vdash\psi$.
        \item $e\Vdash\phi\to\psi$ if and only if for all $f$, $f\Vdash\phi$ implies $ef\Vdash\psi$.
        \item $e\Vdash \forall X\in S\phi(X)$ if and only if for all $a\in\immd(S)$, $ea\Vdash \phi(S\downarrow a)$.
        \item $e\Vdash \exists X\in S\phi(X)$ if and only if $(e)_0\in\immd(S)$ and $(e)_1\Vdash \phi(S\downarrow(e)_0)$.
        \item $e\Vdash\forall X\phi(X)$ if and only $\forall X [\Set(X)\to e\Vdash\phi(X)]$.
        \item $e\Vdash\exists X\phi(X)$ if and only $\exists X [\Set(X)\land e\Vdash\phi(X)]$.
    \end{itemize}
    Here $\Set(X)$ is the statement `$X$ is a well-founded tree.'
\end{definition}

Lubrasky proved that the above interpretation gives a model of $\CZF$ with the full Separation.
\begin{theorem}[Lubarsky {\cite{Lubarsky2006SOA}}]
    The above interpretation interprets the axioms of $\CZF$ with the full Separation. 
\end{theorem}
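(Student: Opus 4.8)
\section*{Proof proposal}

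The plan is to establish a \emph{soundness theorem}: for each axiom $\phi$ of $\CZF$ with full Separation, I would exhibit (provably in full Second-Order Arithmetic) a number $e$ with $e\Vdash\phi$, handling the schematic axioms uniformly in their formula parameters. Before touching any individual axiom I would record the basic structural lemmas on which everything rests. Chief among these is that realized equality is an equivalence relation with \emph{uniform} realizers --- fixed terms realizing reflexivity, symmetry, and transitivity of $=$ on trees, obtained by induction on the well-founded structure via \autoref{Corollary: CKM-Sect1-TreeTI} and its two-variable form \autoref{Corollary: CKM-Sect1-TreeTI2} --- together with \emph{substitutivity}: from realizers of $S=T$ and $\phi(S)$ one can uniformly compute a realizer of $\phi(T)$. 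Substitutivity is proved by induction on the structure of $\phi$, the atomic cases $S\in U$ and $S=U$ being read off the clauses defining $\Vdash$ with help from the recursion theorem for $\mathsf{PCA}$. This is what makes the model extensional, and it simultaneously validates Extensionality, since the defining clause of $e\Vdash S=T$ is exactly the extensionality condition expressed through immediate successors.

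Next I would dispatch the structural axioms by explicit tree constructions. For Pairing of trees $S,T$ I form the tree whose immediate successors are $0$ and $1$ with subtrees $S$ and $T$; for Union of $S$ I collect the ``grandchildren'' $(S\downarrow a)\downarrow b$ for $a\in\immd(S)$ and $b\in\immd(S\downarrow a)$; for Infinity I build a tree whose immediate successors code the finite von Neumann numerals, defining the numeral trees by the recursion theorem. In each case I must check that the result is again well-founded (an infinite path would project to an infinite path in $S$, contradicting $\WF(S)$) and that the relevant membership and equality statements are realized; both are routine once substitutivity is available. Set Induction is the structural axiom with genuine content: given a realizer of $\forall x(\forall y\in x\,\phi(y)\to\phi(x))$, I use the recursion theorem to define a term that recurses over $\immd(S)$ on input a tree $S$, and I verify that it realizes $\forall x\,\phi(x)$ by transfinite induction on the well-founded tree, i.e. by \autoref{Corollary: CKM-Sect1-TreeTI}.

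Full Separation is where the strength of the ambient theory first becomes visible. Given a tree $S$ and a formula $\phi$, I represent $\{x\in S\mid\phi(x)\}$ by the tree whose immediate successors are the \emph{pairs} $\mathbf{p}af$ with $a\in\immd(S)$ and $f\Vdash\phi(S\downarrow a)$, the subtree below such a pair being $S\downarrow a$. The crucial point is that the indexing predicate ``$f\Vdash\phi(S\downarrow a)$'' is an arbitrary second-order formula --- the realizability relation for unbounded $\phi$ has unbounded quantifier complexity --- so forming this set of nodes at all requires \emph{full} second-order comprehension. This is precisely why the interpretation needs full Second-Order Arithmetic rather than a fragment, and why it yields full rather than $\Delta_0$ Separation; well-foundedness and the two inclusions defining membership in the separated set are then immediate.

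The main obstacle, as expected, is Strong Collection (and, layered on top of it, Subset Collection), and it is here that the choice axioms enter. Suppose $e\Vdash\forall x\in a\,\exists y\,\phi(x,y)$. Unwinding the clauses, for each $c\in\immd(a)$ the value $ec$ realizes $\exists y\,\phi(a\downarrow c,y)$, which by definition only asserts \emph{that} some tree $Y$ with $\Set(Y)$ and $ec\Vdash\phi(a\downarrow c,Y)$ exists; the witness tree $Y_c$ is \emph{not} decoded from the number $ec$. To assemble these witnesses into a single set $b$ realizing a subimage I must choose, uniformly in $c\in\immd(a)\subseteq\bbN$, a second-order object $Y_c$, which is exactly an instance of $\mathsf{\Sigma^1_\infty\mhyphen AC}$ (a single $\mathsf{\Sigma^1_\infty\mhyphen DC}$ recursion suffices if one prefers to build $b$ level by level). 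Having chosen the $Y_c$, I let $b$ be the tree whose immediate successors code the pairs $\mathbf{p}cY_c$ with subtree $Y_c$, and verify the two directions of $R\colon a\lrlrarrows b$. Subset Collection follows the same pattern but quantifies over all realized multivalued functions $R\colon a\rrarrows b$; since their witnesses can be confined to $\immd(b)$, the relevant choices range over a set coded in Second-Order Arithmetic, and one collects the resulting subimages into a single $c$ using the same choice principle. Checking that these collection realizers are correct, and that the choices can be made so that the assembled trees remain genuinely well-founded, is the technical heart of the argument.
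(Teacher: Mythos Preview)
The paper does not actually prove this theorem; it is stated as a citation of Lubarsky's original article and then used as a black box for the rest of \autoref{Section: Strength of CKM}. Your proposal is therefore not competing against any argument in the present paper but is a reconstruction of Lubarsky's proof.

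As such a reconstruction your sketch is essentially correct and follows the standard line: uniform realizers for the equivalence and substitutivity of $=$ (obtained by well-founded induction via the recursion theorem), explicit trees for Pairing, Union, and Infinity, Set Induction via the recursion theorem together with \autoref{Corollary: CKM-Sect1-TreeTI}, and the crucial observation that full Separation needs full comprehension because the node set $\{\mathbf{p}af\mid a\in\immd(S),\ f\Vdash\phi(S\downarrow a)\}$ has arbitrary second-order complexity. Your account of Strong Collection is also right: the witnessing tree for $\exists y$ is not decoded from the realizer, so $\mathsf{\Sigma^1_\infty\mhyphen AC}$ is genuinely needed to assemble the family $(Y_c)_{c\in\immd(a)}$ into a single set.

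One correction concerns Subset Collection. You describe it as needing ``the same choice principle,'' but no choice is required there. The set $c$ is built \emph{in advance} from $a$ and $b$ alone, before any $R$ or realizer $f$ appears: take $\immd(c)$ to consist of those $g\in\bbN$ with $gx\downarrow$ and $gx\in\immd(b)$ for every $x\in\immd(a)$, and let $c\downarrow g$ be the tree with $\immd(c\downarrow g)=\immd(a)$ and $(c\downarrow g)\downarrow x=b\downarrow(gx)$. A realizer $f$ of $R\colon a\rrarrows b$ then yields the index $g=\lambda x.(fx)_0$, and $d=c\downarrow g$ is the required subimage. This is exactly the construction the paper later reuses, in type-theoretic dress, when it realizes second-order Subset Collection for $M$; only comprehension (to form $\immd(c)$) is needed, not $\mathsf{\Sigma^1_\infty\mhyphen AC}$.
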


Now let us define $M$ using the type system. Before defining $M$, we introduce a way to turn a member of $\sfV$ into a tree.

\begin{definition}
    Let us define a function $\frakt$ of domain $\sfV$ recursively as follows:
    \begin{equation*}
        \frakt(\sup(A,f)) = \{\lag \rag\} \cup \{\lag x\rag^\frown\sigma\mid x\in\ext(A),\ \sigma\in \frakt(fx)\}.
    \end{equation*}
\end{definition}

Then define
\begin{equation*}
    M = \{\lag\rag\} \cup \{\lag x\rag^\frown \sigma \mid x\in\sfV,\ \sigma\in\frakt(x)\}.
\end{equation*}
We can see that $\immd(\frakt(\sup(A,f)))= \ext(A)$ and $\immd(M) = \sfV$. Also,we have $\frakt(\sup(A,f))\downarrow x =\frakt (f x)$ for $x\in \ext(A)$ by definition. Similarly, we have $M\downarrow x = \frakt(x)$ for $x\in \sfV$.

We will claim that $M$ is the desired set, but we should check first that $M$ is a well-founded tree:
\begin{lemma}
    For each $x\in \sfV$, $\frakt(x)$ is well-founded. Hence $M$ is also well-founded.
\end{lemma}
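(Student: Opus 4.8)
The plan is to prove the statement by induction along the canonical well-founded ``subterm'' relation on $\sfV$, and then to read off the well-foundedness of $M$ as a one-line consequence. For $x,y\in\sfV$ write $y\prec x$ when $x=\sup(A,f)$ and $y=fz$ for some $z\in\ext(A)$. This is precisely the relation witnessing that $\sfV$ is inductively generated, so $\prec$ is well-founded over $\sfV$; indeed it is this very well-foundedness that licenses the recursive definition of $\frakt$ in the first place. Consequently \autoref{Proposition: CKM-Sect1-TIinfty} applies with domain $\sfV$ and relation $\prec$, giving transfinite induction for the formula $\WF(\frakt(x))$. Note that $\WF$ is a $\Pi^1_1$ assertion (the negation of the existence of an infinite path), so we genuinely need induction for arbitrary second-order formulas, not merely arithmetical ones.

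The inductive step is the computational core. Fix $x=\sup(A,f)$ and assume, as the induction hypothesis, that $\WF(\frakt(fz))$ holds for every $z\in\ext(A)$. Suppose toward a contradiction that $g$ is an infinite path through $\frakt(\sup(A,f))$. Since $\immd(\frakt(\sup(A,f)))=\ext(A)$, the first coordinate $g(0)$ lies in $\ext(A)$; and using the identity $\frakt(\sup(A,f))\downarrow g(0)=\frakt(f\,g(0))$ recorded above, the shifted sequence $n\mapsto g(n+1)$ is an infinite path through $\frakt(f\,g(0))$, contradicting $\WF(\frakt(f\,g(0)))$. Hence $\WF(\frakt(x))$, completing the induction. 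I would emphasize that this step is choice-free: the tail of the given path is produced by a direct definition, so no appeal to $\mathsf{\Sigma^1_\infty\mhyphen DC}$ or comprehension for infinite paths is needed beyond the bare statement of $\WF$.

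Finally, for $M$ itself I would repeat the same move one level higher. Any infinite path $g$ through $M$ has $g(0)\in\immd(M)=\sfV$, and since $M\downarrow g(0)=\frakt(g(0))$, the tail $n\mapsto g(n+1)$ would be an infinite path through $\frakt(g(0))$, which is well-founded by the part just proved; this contradiction yields $\WF(M)$.

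The one delicate point lies in the first paragraph rather than in the induction: one must confirm that the inductive definition of $\sfV$ really does produce a $\prec$ that is \emph{provably} well-founded in Second-Order Arithmetic, so that \autoref{Proposition: CKM-Sect1-TIinfty} is legitimately applicable to a $\Pi^1_1$ predicate over $\sfV$. I expect this to be the main obstacle to state carefully, but not a deep one, since the same well-foundedness is already implicit in the legitimacy of the recursion defining $\frakt$; so the task reduces to recording it explicitly rather than to proving anything substantially new.
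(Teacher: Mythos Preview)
Your proposal is correct and follows essentially the same approach as the paper: induction along the subterm relation on $\sfV$, with the inductive step being that a tree whose immediate subtrees are all well-founded is itself well-founded, and then the same observation one level up for $M$. The paper's proof is much terser---it simply says ``$\frakt(\sup(A,f))$ is a join of well-founded trees, so it is also well-founded''---whereas you unpack this claim with an explicit infinite-path-shifting argument and are more careful about invoking \autoref{Proposition: CKM-Sect1-TIinfty}; but the underlying idea is identical.
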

\begin{proof}
    We prove it by induction on $\sup(A,f)\in\sfV$: Suppose that for every $x\in \ext(A)$, $\frakt(fx)$ is well-founded. Then $\frakt(\sup(A,f))$ is a join of well-founded trees, so it is also well-founded. Similarly, $M$ is also well-founded.   
\end{proof}

Now let us prove we can realize the statement `$M$ is a transitive model of the second-order $\CZF$.' We will divide the proof into parts. The proof is done by constructing realizers, and the type system attached in $M$ helps to find a witness for existential claims from $M$. Constructing the witness in $M$ follows Aczel's type-theoretic interpretation, so the reader should check arguments in \cite{Aczel1982} to see how the forthcoming proof works. 

Throughout the remaining section, $\bfq_1$ and $\bfq_2$ mean recursive functions returning the second and third component of a triple respectively. Especially, we have the following:
\begin{itemize}
    \item $\bfq_1(\sup(A,F))=\bfq_1(\sum_{x:A} Fx)=\bfq_1(\prod_{x:A} Fx)=A$, and
    \item $\bfq_2(\sup(A,F))=\bfq_2(\sum_{x:A} Fx)=\bfq_2(\prod_{x:A} Fx)=F$.
\end{itemize}
Also, let us fix a realizer $\mathsf{i}_r$ realizing $X=X$ for every $X$, whose construction is presented in \cite[Proposition 4.3.3]{SwanPhD}.

\begin{lemma}
    We can realize `$M$ is transitive.'
\end{lemma}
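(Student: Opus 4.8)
The goal is to realize the statement that $M$ is transitive, which in this tree-based realizability model means: for every $X \in \immd(M)$ and every $Y \in \immd(M \downarrow X)$, we have $Y \in \immd(M)$, witnessed uniformly by a realizer. Since $\immd(M) = \sfV$ and $M \downarrow X = \frakt(X)$ for $X \in \sfV$, transitivity amounts to showing that whenever $\sup(A,f) \in \sfV$ and $x \in \ext(A)$, the child $\frakt(fx)$ appears as $M \downarrow Y$ for some $Y \in \sfV$ — and indeed $Y = fx$ works since $fx \in \sfV$ and $M \downarrow (fx) = \frakt(fx)$. So the semantic content is nearly immediate from the definitions; the real work is packaging it into an actual realizer in Kleene's first algebra.

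The plan is to first write out precisely what realizing transitivity requires. Transitivity is the statement $\forall^0 X \forall^0 Y (Y \in X \to Y \in M)$ relativized to $M$, i.e. roughly $\forall X \in M \, \forall Y \in X \, (Y \in M)$. Unfolding the realizability clauses for the relativized universal and implication, I would need a term $\mathbf{t}$ such that, given (a code for) $X \in \immd(M) = \sfV$, a code $a \in \immd(M)$ selecting $X = M \downarrow a$, then given $Y$ with a realizer $e \Vdash Y \in X$, the term produces a realizer for $Y \in M$. Recall $e \Vdash Y \in X$ unfolds to $(e)_0 \in \immd(X) = \immd(\frakt(a)) = \ext(\bfq_1(a))$ and $(e)_1 \Vdash Y = (X \downarrow (e)_0)$. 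I would then observe that $X \downarrow (e)_0 = \frakt(a) \downarrow (e)_0 = \frakt(\bfq_2(a)\,(e)_0)$, and that $\bfq_2(a)\,(e)_0 \in \sfV$, so this child already sits at the top level of $M$ as $M \downarrow (\bfq_2(a)\,(e)_0)$.

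The key step is therefore to build a realizer for $Y \in M$ from the data above. By the clause for $\in$, a realizer $g \Vdash Y \in M$ must satisfy $(g)_0 \in \immd(M) = \sfV$ and $(g)_1 \Vdash Y = (M \downarrow (g)_0)$. Taking $(g)_0 := \bfq_2(a)\,(e)_0$ (which lies in $\sfV$) gives $M \downarrow (g)_0 = \frakt(\bfq_2(a)\,(e)_0) = X \downarrow (e)_0$, so I can set $(g)_1 := (e)_1$, since $(e)_1 \Vdash Y = (X \downarrow (e)_0)$ is exactly the required equality realizer. Concretely, using the pairing term $\mathbf{p}$ from Kleene's algebra, the realizer is $g = \mathbf{p}(\bfq_2(a)\,(e)_0)\,(e)_1$, and I would define the global transitivity realizer by lambda-abstracting over the inputs $a$ and $e$ via the $\lambda x.t$ construction guaranteed by the earlier lemma, obtaining a closed term $\mathbf{t}$ with $\mathbf{t}ae \simeq \mathbf{p}(\bfq_2(a)\,e_0)\,e_1$ after suitable projections.

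The main obstacle — though it is bookkeeping rather than a conceptual difficulty — is correctly matching the realizability clauses for the \emph{relativized} quantifiers (transitivity must be interpreted as a statement true \emph{inside} $M$, i.e. with quantifiers restricted to $\immd(M)$) against the clauses as stated in the definition, and verifying that every intermediate object I feed into $\frakt$ and $\immd$ genuinely lands in $\sfV$ so that the identities $\immd(\frakt(\sup(A,f))) = \ext(A)$ and $\frakt(\sup(A,f)) \downarrow x = \frakt(fx)$ apply. I would discharge this by explicitly invoking $\immd(M) = \sfV$, $M \downarrow x = \frakt(x)$ for $x \in \sfV$, and the $\bfq_1, \bfq_2$ projection identities recorded just before the lemma, so that each application of a realizability clause is justified termwise; the equality realizer $(e)_1$ is simply reused, so no appeal to the fixed reflexivity realizer $\mathsf{i}_r$ is needed here.
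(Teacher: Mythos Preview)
Your argument is correct and the core idea matches the paper's: the child $\frakt(a)\downarrow b$ of $\frakt(a)$ is literally $\frakt((\bfq_2 a)b)$, which already sits at the top level of $M$ since $(\bfq_2 a)b\in\sfV$. The only difference is in the formulation of transitivity you realize and hence in the shape of the realizer. The paper realizes the doubly-bounded form $\forall S\in M\,\forall T\in S\,(T\in M)$, so its inner input is an index $b\in\immd(\frakt(a))$ and the equality component must be supplied from scratch via the reflexivity realizer, yielding $\lambda x\lambda y.\,\bfp((\bfq_2 x)y)\,\mathsf{i}_r$. You instead realize the implication form $\forall X\in M\,\forall Y\,(Y\in X\to Y\in M)$, so your inner input is a realizer $e\Vdash Y\in\frakt(a)$, and you can recycle $(e)_1$ for the equality part, giving $\lambda a\lambda e.\,\bfp((\bfq_2 a)(e)_0)(e)_1$ and avoiding $\mathsf{i}_r$ as you note. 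Both are valid; the paper's version is marginally simpler to state, while yours shows that reflexivity is not actually needed for this particular lemma.
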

\begin{proof}
    Let us claim that the following holds:
    \begin{equation*}
        \lambda x\lambda y.\bfp ((\bfq_2 x)y) \mathsf{i}_r \Vdash \forall S\in M\forall T\in S (T\in M).
    \end{equation*}
    The above is equivalent to
    \begin{equation*}
        \forall a\in \sfV \forall b\in \immd(\frakt(a)) [\bfp ((\bfq_2 a)b) \mathsf{i}_r\Vdash \frakt(a)\downarrow b \in M].
    \end{equation*}
    Now let $a=\sup(A,F)$, then $\immd(\frakt(a)) = \ext(A)$. Hence the above is equivalent to
    \begin{equation*}
        \forall a\in\sfV \forall b\in \ext(A) [\bfp (Fb)\mathsf{i}_r\Vdash \frakt(Fb)\in M].
    \end{equation*}
    We can see that the above holds since $Fb\in \sfV$ and $\mathsf{i}_r \Vdash \frakt(Fb) = M\downarrow(Fb)$.
\end{proof}

To prove $M$ is closed under Bounded Separation, let us introduce a way to turn the set of realizers for a bounded formula into a type:
\begin{proposition}
    Let $\phi(x_0,\cdots,x_{n-1})$ be a bounded formula with all free variables displayed, and $a_0,\cdots,a_{n-1}\in\sfV$.
    Then we can find a type $\|\phi(a_0,\cdots,a_{n-1})\|$ such that 
    \begin{equation} \label{Formula: Type theoretic interpretation for a bounded formula}
        e\Vdash \phi(\frakt(a_0),\cdots,\frakt(a_{n-1})) \iff e \in \ext(\|\phi(a_0,\cdots,a_{n-1})\|).
    \end{equation}
    Furthermore, the map $a_0,\cdots,a_{n-1}\mapsto \|\phi(a_0,\cdots,a_{n-1})\|$ is recursive.
\end{proposition}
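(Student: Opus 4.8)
The plan is to define $\|\phi(a_0,\dots,a_{n-1})\|$ by recursion on the logical structure of the bounded formula $\phi$, reading each clause of the realizability relation $\Vdash$ off as the extension of a type constructor. For the propositional cases I would set $\|\bot\| := \sfN_0$ (whose extension is empty, matching $e\nVdash\bot$), $\|\phi\land\psi\| := \|\phi\|\times\|\psi\|$, $\|\phi\lor\psi\| := \|\phi\|+\|\psi\|$, and $\|\phi\to\psi\| := \|\phi\|\to\|\psi\|$; in each case the extension of the right-hand type is, by the definition of $\ext$ together with the pairing/projection conventions $\bfp,(\cdot)_0,(\cdot)_1$, exactly the set of realizers described by the corresponding clause. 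For the bounded quantifiers, writing $c=\sup(A,F)$ so that $\immd(\frakt(c))=\ext(A)$ and $\frakt(c)\downarrow x=\frakt(Fx)$, I would set $\|\forall X\in c\,\psi(X)\| := \prod_{x:A}\|\psi(Fx)\|$ and $\|\exists X\in c\,\psi(X)\| := \sum_{x:A}\|\psi(Fx)\|$, the inner family $x\mapsto\|\psi(Fx)\|$ being supplied by the inductive hypothesis and the recursiveness of $F=\bfq_2 c$. Each such verification is a direct unwinding of one clause of $\Vdash$ against one clause of $\ext$, so the only genuine content sits in the atomic cases.

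The atomic predicates $=$ and $\in$ are defined by a \emph{mutual} recursion that descends on the tree structure of the arguments rather than on the formula, so I would treat them first and separately. Guided by the clauses for $S\in T$ and $S=T$, I would define, for $a,b\in\sfV$ with $a=\sup(A,F)$ and $b=\sup(B,G)$,
\begin{equation*}
    \|a\in b\| := {\textstyle\sum_{x:B}}\|a=Gx\|,\qquad
    \|a=b\| := \big({\textstyle\prod_{x:A}}\|Fx\in b\|\big)\times\big({\textstyle\prod_{y:B}}\|Gy\in a\|\big).
\end{equation*}
Since the two clauses refer to each other, I would realize them as a single recursive function via the Recursion Theorem: construct a term $\theta$ with $\theta(\lag 0,a,b\rag)\simeq\|a\in b\|$ and $\theta(\lag 1,a,b\rag)\simeq\|a=b\|$, where the $\prod$- and $\sum$-codes are assembled according to \autoref{Definition: Coding types} and the inner families are $\lambda$-terms built from $\theta$, $\bfq_1$, and $\bfq_2$. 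This makes $\theta$ manifestly recursive; what remains is to check that it outputs genuine type codes and that it satisfies the required biconditional \eqref{Formula: Type theoretic interpretation for a bounded formula}.

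To verify the biconditional at the atomic level I would argue by transfinite induction, and this is the step I expect to be the main obstacle. The difficulty is that the recursion does not descend in a single fixed argument: the clause for $\|a\in b\|$ passes to $\|a=Gx\|$ (shrinking the second argument), whereas the clause for $\|a=b\|$ passes both to $\|Fx\in b\|$ (shrinking the first) \emph{and} to $\|Gy\in a\|$, which swaps the two arguments. Hence a plain product induction as in \autoref{Corollary: CKM-Sect1-TreeTI2} does not apply verbatim; instead I would induct on a symmetric well-founded relation on pairs (for instance the one obtained from the natural sum of the well-founded tree-ranks of $\frakt(a)$ and $\frakt(b)$, which is available by the preceding lemma that each $\frakt(x)$ is well-founded together with $\mathsf{\Pi^1_\infty\mhyphen TI}$), and check that each of the three recursive calls strictly decreases it. Granting the atomic case, the propositional and bounded-quantifier cases follow by a routine induction on $\phi$, and recursiveness of the full map $a_0,\dots,a_{n-1}\mapsto\|\phi(a_0,\dots,a_{n-1})\|$ propagates through this induction because every constructor used ($\sfN_0$, $+$, $\times$, $\prod$, $\sum$) is primitive recursive on Gödel codes and every family appearing in a $\prod$ or $\sum$ is a $\lambda$-term in recursive data.
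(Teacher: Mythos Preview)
Your proposal is correct and follows the same overall strategy as the paper: handle the atomic predicates by a tree-level recursion realized via an effective fixed point, verify the biconditional by transfinite induction over $\sfV$, and then treat $\bot$, $\land$, $\lor$, $\to$, and the bounded quantifiers by the obvious type constructors exactly as you describe (the paper simply says these ``follow the type-theoretic interpretation'' and omits them).

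The one place you diverge is in organizing the atomic recursion, and it is precisely the source of the obstacle you flag. The paper does \emph{not} define $\|{=}\|$ and $\|{\in}\|$ mutually; it defines equality in one shot as
\[
\textstyle
\|\sup(A,f)=\sup(B,g)\| \;=\; \prod_{a:A}\sum_{b:B}\|fa=gb\|\ \times\ \prod_{b:B}\sum_{a:A}\|gb=fa\|,
\]
and only afterwards sets $\|x\in\sup(A,f)\|=\sum_{a:A}\|x=fa\|$. In this formulation every recursive call passes from $(\sup(A,f),\sup(B,g))$ to $(fa,gb)$ or $(gb,fa)$, so \emph{both} components strictly decrease and the plain product induction of \autoref{Corollary: CKM-Sect1-TreeTI2} applies without any symmetric rank measure. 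Your mutual recursion is extensionally the same---unfolding your $\|Fx\in b\|$ and $\|Gy\in a\|$ one step yields exactly the display above---but keeping the $\in$-step explicit creates the intermediate call $\|Gy\in a\|$ in which neither coordinate has decreased from $(a,b)$, which is what forces you toward a rank-sum argument. That fix works, but the paper's arrangement simply makes the difficulty disappear.
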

\begin{proof}
    Let us prove it for atomic formulas first. First, let us recursively define
    \begin{equation*}
        \textstyle
        \lVert\sup(A,f)=\sup(B,g)\rVert = \prod_{a:A}\sum_{b:B} \|fa=gb\| \times \prod_{b:B}\sum_{a:A} \|gb=fa\|.
    \end{equation*}
    We can understand the above definition as an effective transfinite recursion (cf. \cite[Theorem 3.2]{Sacks1990HigherRecursion}) as follows: Let us consider the relation $\prec$ over $\sfV$ defined by
    \begin{equation*}
        x \prec y \iff \text{if $y=\sup(A,f)$ then $x=fz$ for some $z\in\ext(A)$.}
    \end{equation*}
    Then $\prec$ is well-founded relation over $\sfV$. Now consider a recursive function $I$ satisfying
    \begin{equation*}
        \{I(e)\}(a)(b) = (4,(3,\bfq_1 a,(4,\bfq_1 b, \lambda xy. \{e\}((\bfq_2a) x)((\bfq_2 b)y) )),(3,\bfq_1 b,(4, \bfq_1 a, \lambda xy. \{e\}((\bfq_2 b)y)((\bfq_2a) x) )))
    \end{equation*}
    (cf. \autoref{Definition: Coding types},) or, alternatively,
    \begin{equation*} \textstyle
        \{I(e)\}(a)(b) = \prod_{x:\bfq_1 a} \sum_{y:\bfq_1 b} \{e\}((\bfq_2a)x)((\bfq_2b)y) \times \prod_{y:\bfq_1 b} \sum_{x:\bfq_1 a} \{e\}((\bfq_2b)y)((\bfq_2a)x)
    \end{equation*}
    By Effective Transfinite Recursion theorem, we can find an index for a recursive function $\mathsf{eq}$ defined over $\sfV$ such that $\{\mathsf{eq}\}$ and $\{I(\mathsf{eq})\}$ are extensionally the same. Then let us take
    \begin{equation*}
        \lVert\sup(A,f) = \sup(B,g)\rVert := \mathsf{eq}(\sup(A,f))(\sup(B,g)).
    \end{equation*}
    Now let us prove that $\|x=y\|$ satisfies \eqref{Formula: Type theoretic interpretation for a bounded formula} by the simultaneous induction on $x,y\in\sfV$. 
    To see this, observe that $e\Vdash \frakt(\sup(A,f)) = \frakt(\sup(B,g))$ are equivalent to the conjunction of the following two statements:
    \begin{enumerate}
        \item $\forall a\in \ext(A) [(e_0a)_0\in \ext(B) \land (e_0a)_1\Vdash \frakt(fa)=\frakt(g(e_0a)_0)]$.
        \item $\forall b\in \ext(B) [(e_1a)_0\in \ext(A) \land (e_1b)_1\Vdash \frakt(gb)=\frakt(f(e_1b)_1)]$.
    \end{enumerate}
    Here we wrote $(e)_0$ and $(e)_1$ by $e_0$ and $e_1$ to reduce parentheses. By the inductive hypothesis, the above statements are equivalent to
    \begin{enumerate}
        \item $\forall a\in \ext(A) [(e_0a)_0\in \ext(B) \land (e_0a)_1 \in \|fa=g(e_0a)_0\|]$.
        \item $\forall b\in \ext(B) [(e_1a)_0\in \ext(A) \land (e_1b)_1\in \|gb=f(e_1b)_1\|]$.
    \end{enumerate}
    We can see that the conjunction of the above two is equivalent to
    \begin{equation*} \textstyle
        \forall a\in \ext(A)[e_0a \in \ext(\sum_{b:B}\|fa=gb\|)] \land \forall b\in\ext(B) [e_1b\in \ext(\sum_{a:A}\|gb=fa\| )].
    \end{equation*}
    It is not too hard to check that the above is equivalent to
    \begin{equation*} \textstyle
        e \in \ext\left(\prod_{a:A}\sum_{b:B}\|fa=gb\| \times \prod_{b:B}\sum_{a:A}\|gb=fa\| \right),
    \end{equation*}
    as we promised.

    For $\in$, let us define
    \begin{equation*} \textstyle
        \|x\in \sup(A,f)\| = \sum_{a:A}\|x=fa\|.
    \end{equation*}
    The map $x,y\mapsto \|x\in y\|$ is recursive since we can express it by
    \begin{equation*}
        \|x\in y\| = (4, \bfq_1 y, \lambda u.\mathsf{eq}(x)((\bfq_2 y)u) ).
    \end{equation*}
    Then we can see that
    \begin{equation*}
        \begin{array}{lcl}
        e \Vdash \frakt(a) \in \frakt(\sup(A,f)) &\iff& (e)_0 \in \ext(A) \land (e)_1 \Vdash \frakt(a) = \frakt(f(e)_0) \\
        &\iff&  \frakt(\sup(A,f)) \\
        &\iff& (e)_0 \in \ext(A) \land (e)_1 \in \|a = f(e)_0\| \\
        &\iff& e \in \ext\left( \sum_{x:A} \|a=fx\|\right).
        \end{array}
    \end{equation*}
    The remaining cases can be proven similarly.
    Constructing $\|\phi\|$ follows the type-theoretic interpretation, and proving the equivalence is a tedious manipulation, so we omit the details.
\end{proof}

\begin{proposition}
    We can realize `$M$ is closed under Bounded Separation.'
\end{proposition}
\begin{proof}
    Let $\phi(x,u)$ be a bounded formula.
    We want to find a realizer $e$ satisfying
    \begin{equation*}
        e\Vdash \forall a,p\in M \exists b\in M [\forall x\in b(x\in a\land \phi(x,p)) \land \forall x\in a(\phi(x,p)\to x\in b)].
    \end{equation*}
    The above is equivalent to the following: For each $a,p\in \sfV$,
    \begin{equation*}
        (eap)_1 \Vdash \forall x\in \frakt((eap)_0)(x\in \frakt(a)\land \phi(x,\frakt(p))) \land \forall x\in \frakt(a)(\phi(x,\frakt(p))\to x\in \frakt((eap)_0)).
    \end{equation*}
    We will mimic the proof of the Bounded Separation under the type-theoretic interpretation to find $b=(eap)_0$: Suppose that $a=\sup(A,F)$. Consider
    \begin{equation*}
        b = \textstyle \sup(\sum_{x:A}\|\phi(Fx,p)\|, \lambda x. F(x)_0).
    \end{equation*}
    Then we can see that the following holds:
    \begin{equation*}
        \lambda x. \bfp(\bfp (F(x)_0)\mathsf{i}_r)(x)_1 \Vdash \forall x\in \frakt(b)(x\in \frakt(a)\land \phi(x,\frakt(p))).
    \end{equation*}
    We also have
    \begin{equation*}
        \lambda x\lambda f.\bfp(\bfp xf)\mathsf{i}_r \Vdash \forall x\in \frakt(a)[\phi(x,\frakt(p))\to x\in \frakt(b)].
    \end{equation*}
    Thus we can pick $(eap)_1 = \bfp(\lambda x. \bfp(\bfp (F(x)_0)\mathsf{i}_r)(x)_1)(\lambda x\lambda f.\bfp(\bfp xf)\mathsf{i}_r)$.
    Hence the following $e$ works:
    \begin{equation*}\textstyle
        e = \lambda a,p. \bfp(\sup(\sum_{x:\bfq_1a}\|\phi((\bfq_2 a)x,p)\|, \lambda x.(\bfq_2 a)(x)_0))
        (\bfp(\lambda x. \bfp(\bfp (F(x)_0)\mathsf{i}_r)(x)_1)(\lambda x\lambda f.\bfp(\bfp xf)\mathsf{i}_r)) \qedhere
    \end{equation*}
\end{proof}

\begin{lemma}
    We can realize `$M$ is closed under Union' and $\omega\in M$.
\end{lemma}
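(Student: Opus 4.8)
The plan is to produce explicit realizers for the two statements, mirroring Aczel's type-theoretic interpretation as we did for Bounded Separation. For the Union axiom, I must realize $\forall S\in M\,\exists U\in M\,\forall x\,(x\in U\lr \exists T\in S\,(x\in T))$, so given $a=\sup(A,F)\in\sfV$ I need to name a witness in $\sfV$ whose extension gathers the "grandchildren" of $a$. The natural candidate is to iterate the $\bfq$-projections one level: for each $x\in\ext(A)$ the child $Fx$ is again of the form $\sup(\bfq_1(Fx),\bfq_2(Fx))$, and I want to index its children by the dependent sum over $A$. Concretely I would set
\begin{equation*}\textstyle
    U = \sup\Bigl(\sum_{x:A}\bfq_1(Fx),\ \lambda w.\,\bfq_2\bigl(F(w)_0\bigr)(w)_1\Bigr),
\end{equation*}
so that $\ext$ of the index type consists of pairs $\bfp x u$ with $x\in\ext(A)$ and $u\in\ext(\bfq_1(Fx))=\immd(\frakt(Fx))$, and the second component sends such a pair to the $u$-th child of $Fx$. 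One then checks, unfolding the definition of $\Vdash$ for the two bounded quantifiers and using $M\downarrow x=\frakt(x)$ together with the fixed identity realizer $\mathsf{i}_r$, that an explicit $\lambda$-term built from $\bfp$ and the projections realizes both directions of the biconditional; the forward direction reads off the pair from a realizer of the existential, and the backward direction assembles the pair from membership witnesses.

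For $\omega\in M$ the plan is first to exhibit a member $\mathbf{w}\in\sfV$ whose tree $\frakt(\mathbf{w})$ realizes the axiom of Infinity, i.e.\ codes a set satisfying $\emptyset\in\mathbf{w}$ and closure under $x\mapsto x\cup\{x\}$. The type $\sfN$ with $\ext(\sfN)=\bbN$ is tailor-made for this: I would define by ordinary recursion on $n\in\bbN$ a sequence of numerals $\underline n\in\sfV$ representing the finite von Neumann ordinals, with $\underline 0=\sup(\sfN_0,\emptyset)$ and $\underline{n+1}$ built from $\underline 0,\dots,\underline n$ using the type $\sfN_{n+1}$ as index, and then set $\mathbf{w}=\sup(\sfN,\lambda n.\underline n)$. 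Since $\ext(\sfN)=\bbN$ is exactly the index set, $\immd(\frakt(\mathbf{w}))=\bbN$ and the children are the $\frakt(\underline n)$. Realizing $\mathbf{w}\in M$ itself is then immediate from $M\downarrow \mathbf{w}=\frakt(\mathbf{w})$ and $\mathsf{i}_r$, exactly as transitivity was handled.

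The steps, in order, are: (1) write the explicit witness $U\in\sfV$ for Union and verify $\immd(\frakt(U))$ is the intended index type; (2) construct the forward and backward realizers for the Union biconditional and combine them with $\bfp$; (3) define the numeral sequence $\underline n$ and the infinity witness $\mathbf{w}$; (4) realize the defining properties of $\omega$ (empty set and successor closure) to certify $\mathbf{w}$ as a legitimate $\sfV$-code for the natural numbers; and (5) conclude $\mathbf{w}\in M$ via $\mathsf{i}_r$. I expect the main obstacle to be bookkeeping in step (2): matching the nested $\bfp$-pairing of the realizability clauses for $\exists T\in S$ and $x\in U$ against the dependent-sum structure of the index type $\sum_{x:A}\bfq_1(Fx)$, since the existential over $S$ must return both a child index and a realizer that the element lies in that child, and these have to be repackaged into a single element of the new type without type mismatch. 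As with the Bounded Separation proposition, the equivalences are a mechanical but delicate unfolding, so I would state the realizers explicitly and indicate that the verification is routine rather than carrying out every reduction.
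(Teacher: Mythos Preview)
Your proposal is essentially the paper's proof. Your Union witness
\[
\textstyle U=\sup\bigl(\sum_{x:A}\bfq_1(Fx),\,\lambda w.\,\bfq_2(F(w)_0)(w)_1\bigr)
\]
is literally the paper's $b$, and your numeral map $n\mapsto\underline n$ with $\underline n=\sup(\sfN_n,\lambda k.\underline k)$ is exactly the paper's $\mathsf{nat}$ obtained by effective recursion, so $\mathbf{w}=\sup(\sfN,\lambda n.\underline n)$ is the paper's $\underline\omega$.

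Two small points of divergence worth flagging. First, the paper realizes Union in the purely bounded form
\[
\forall a\in M\,\exists b\in M\,[\forall x\in a\,\forall y\in x\,(y\in b)\ \land\ \forall y\in b\,\exists x\in a\,(y\in x)],
\]
whereas you chose the biconditional form with an \emph{unbounded} leading $\forall x$; your remark about ``unfolding the two bounded quantifiers'' is therefore slightly off, since your outer $\forall x$ ranges over all well-founded trees and the realizer must be uniform in $X$. This is harmless (the realizer only manipulates the membership witnesses, not $X$), but the paper's bounded formulation makes the bookkeeping you worry about in step~(2) noticeably lighter. Second, for $\omega\in M$ the paper does not verify the infinity axiom from scratch as in your step~(4); it simply observes that $\frakt(\underline\omega)$ is \emph{equal as a tree} to Lubarsky's canonical infinity witness, so $\bfp\,\underline\omega\,\mathsf{i}_r$ realizes $\omega\in M$ immediately. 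Your route works too, but the identification with Lubarsky's tree is the shorter path to the literal statement ``$\omega\in M$''.
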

\begin{proof}
    For Union, let us realize the following sentence:
    \begin{equation*}
        \forall a\in M\exists b\in M [\forall x\in a\forall y\in x (y\in b)\land \forall y\in b\exists x\in a (y\in x)].
    \end{equation*}
    For $a=\sup(A,F)\in\sfV$, let us consider 
    \begin{equation*} \textstyle
        b=\sup(\sum_{x:A}\bfq_1(Fx),\lambda z.(\bfq_2(F(z)_0))(z)_1).
    \end{equation*}
    Then we have
    \begin{equation*}
        \lambda xy. \bfp(\bfp xy)\mathsf{i}_r \Vdash \forall x\in a\forall y\in x (y\in b)
    \end{equation*}
    and
    \begin{equation*}
        \lambda y. \bfp(y)_0(\bfp(y)_1\mathsf{i}_r)\Vdash \forall y\in b\exists x\in a (y\in x).
    \end{equation*}
    Hence the following realizes $M$ is closed under Union:
    \begin{equation*} \textstyle
        \lambda a. \bfp(\sup(\sum_{x:\bfq_1 a}\bfq_1((\bfq_2 a)x),\lambda z.(\bfq_2((\bfq_2 a)(z)_0))(z)_1))(\bfp(\lambda xy. \bfp(\bfp xy)\mathsf{i}_r)(\lambda y. \bfp(y)_0(\bfp(y)_1\mathsf{i}_r)) )
    \end{equation*}
    For realizing $\omega\in M$, we prove that there is $\underline{\omega}\in\sfV$ such that $\frakt(\underline{\omega})$ is equal to a tree witnessing the axiom of infinity. By effective recursion, we can find a recursive function $\mathsf{nat}$ satisfying 
    \begin{equation*}
        \mathsf{nat}\ n = \sup(\bbN_n,\mathsf{nat}).
    \end{equation*}
    for all $n$. Then define $\underline{\omega}=\sup(\bbN,\mathsf{nat})$. We can see that
    \begin{equation*}
        \frak{t}(\mathsf{nat}\ n) = \{\lag\rag\} \cup \{\lag k\rag^\frown \sigma \mid k<n\land \sigma\in\frak{t}(\mathsf{nat}\ k)\}
    \end{equation*}
    and $\frakt(\underline{\omega}) = \{\lag n\rag^\frown \sigma\mid n<\omega\land \sigma\in \mathsf{nat}\ n\}$. Hence $\frakt(\underline{\omega})$ is equal to the tree witnessing the axiom of infinity presented in \cite{Lubarsky2006SOA}.
\end{proof}

\begin{proposition}
    We can realize `$M$ satisfies second-order Strong Collection.'
\end{proposition}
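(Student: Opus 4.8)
The plan is to realize the formal statement of second-order Strong Collection for $M$,
\[
\forall a\in M\,\forall R\,\big[(R\colon a\rrarrows M)\to \exists b\in M\,(R\colon a\lrlrarrows b)\big],
\]
by producing an explicit realizer, mimicking Aczel's type-theoretic interpretation of Strong Collection. First I would strip the outer quantifiers using $\immd(M)=\sfV$ and $M\downarrow a=\frakt(a)$: realizing the displayed sentence reduces to exhibiting a term $e$ so that for every $a\in\sfV$, every tree $R$, and every $r\Vdash R\colon\frakt(a)\rrarrows M$, the application $ear$ realizes $\exists b\in M\,(R\colon\frakt(a)\lrlrarrows b)$. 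Because the clause for the unbounded quantifier $\forall R$ ranges over all $X$ with $\Set(X)$ without ever feeding $X$ to the realizer, the term $e$ must work \emph{uniformly} over all trees $R$, including those not arising from $M$; this uniformity is precisely what strengthens ordinary Collection to its second-order form.

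Next I would extract a choice function from $r$. Writing $a=\sup(A,F)$, so that $\immd(\frakt(a))=\ext(A)$ and $\frakt(a)\downarrow x=\frakt(Fx)$, the hypothesis $r\Vdash R\colon\frakt(a)\rrarrows M$ says that for each $x\in\ext(A)$ the value $rx$ realizes $\exists y\in M\,(\langle\frakt(Fx),y\rangle\in R)$. Unfolding the bounded existential over $M$ yields $(rx)_0\in\immd(M)=\sfV$ together with $(rx)_1\Vdash \langle\frakt(Fx),\frakt((rx)_0)\rangle\in R$. The key point is that each witness $(rx)_0$ automatically lands in $\sfV$, so $x\mapsto(rx)_0$ is a recursive map $\ext(A)\to\sfV$; no genuine choice principle is invoked, since realizability supplies the witnesses uniformly in $x$.

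I would then set
\[
b=\sup\!\big(\bfq_1 a,\ \lambda x.(rx)_0\big),
\]
which lies in $\sfV$ (as $\bfq_1 a=A$ is a type and each $(rx)_0\in\sfV$), hence realizes $b\in M$, and satisfies $\immd(\frakt(b))=\ext(A)$ with $\frakt(b)\downarrow x=\frakt((rx)_0)$. What makes \emph{Strong} Collection go through, rather than plain Collection, is that $b$ re-uses the very index type $A$ of $a$, so the identity correspondence $x\leftrightarrow x$ on $\ext(A)$ realizes both directions of $R\colon\frakt(a)\lrlrarrows\frakt(b)$: the term $\lambda x.\bfp\,x\,(rx)_1$ realizes $\forall x\in\frakt(a)\,\exists y\in\frakt(b)\,(\langle x,y\rangle\in R)$, and the same pattern realizes $\forall y\in\frakt(b)\,\exists x\in\frakt(a)\,(\langle x,y\rangle\in R)$, in each case using $(rx)_1$ for the membership in $R$. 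Assembling the pieces gives the realizer
\[
e=\lambda a\,\lambda r.\,\bfp\big(\sup(\bfq_1 a,\lambda x.(rx)_0)\big)\big(\bfp(\lambda x.\bfp\,x\,(rx)_1)(\lambda x.\bfp\,x\,(rx)_1)\big).
\]

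The step I expect to be most delicate is essentially bookkeeping rather than conceptual: tracking the coding through $\frakt$, $\immd$, $\downarrow$ and the nested pairings, and checking that the subset/codomain side-conditions hidden in $\rrarrows$ and $\lrlrarrows$ are satisfied by $b$ (its immediate successors are exactly the witnesses drawn from $R$). The genuinely mathematical content—reading $R$ uniformly, extracting a recursive choice function from $r$, and recycling the index type $A$ so that a single domain parametrizes both $a$ and $b$—is routine once the uniform-in-$R$ interpretation of the outer universal class quantifier is made explicit.
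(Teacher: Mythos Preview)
Your proposal is correct and follows essentially the same approach as the paper: extract a recursive choice function from the realizer of the hypothesis, define $b=\sup(\bfq_1 a,\lambda x.(rx)_0)$ reusing the index type of $a$, and use the identity correspondence on $\ext(A)$ to realize both directions of $R\colon\frakt(a)\lrlrarrows\frakt(b)$. The only cosmetic difference is that the paper feeds $Fx$ rather than the index $x$ to the hypothesis realizer, but your version $(rx)_0$ is the one that matches the stated bounded-quantifier clause.
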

\begin{proof}
    Suppose that we are given $a=\sup(A,F)$, a formula $\phi(x,y,R) \equiv (\lag x,y\rag\in R)$ with some parameter well-founded tree $R$, and a realizer $f$ satisfying
    \begin{equation*}
        f \Vdash \forall x\in\frakt(a) \exists y\in M \phi(x,y,R).
    \end{equation*}
    The above is equivalent to
    \begin{equation*}
        \forall x\in \ext(A) [(f(Fx))_0\in\sfV \land (f(Fx))_1 \Vdash \phi(\frakt(Fx), \frakt((f(Fx))_0), R)].
    \end{equation*}
    Take $b=\sup(A, \lambda x.(f(Fx))_0)$. Then we can see that the following two hold:
    \begin{enumerate}
        \item $\lambda x. \bfp x(f(Fx)_1) \Vdash \forall x\in \frakt(a) \exists y\in\frakt(b) \phi(x,y,R)$.
        \item $\lambda x. \bfp x(f(Fx)_1) \Vdash \forall y\in\frakt(b) \exists x\in \frakt(a) \phi(x,y,R)$.
    \end{enumerate}
    Hence we have
    \begin{equation*}
        \bfp(\lambda x. \bfp x(f(Fx)_1))(\lambda x. \bfp x(f(Fx)_1)) \Vdash R\colon \frakt(a)\lrlrarrows \frakt(b).
    \end{equation*}
    Thus we can see that the realizer
    \begin{equation*}\textstyle
        \lambda a, f. \bfp(\sup(\bfq_1 a,\lambda x. f((\bfq_2 a)x)_0))
        (\bfp(\lambda x. \bfp x(f(Fx)_1))(\lambda x. \bfp x(f(Fx)_1)))
    \end{equation*}
    realizes `$M$ satisfies second-order Strong Collection.'
\end{proof}

\begin{proposition}
    We can realize `$M$ satisfies second-order Subset Collection.' 
\end{proposition}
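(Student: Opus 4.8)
The plan is to transcribe Aczel's type-theoretic realization of Subset Collection (Fullness), in which the impredicative-looking quantifier over all multi-valued functions is absorbed into a single function type. Fix $a=\sup(A,F)$ and $b=\sup(B,G)$ in $\sfV$, and recall that $A\to B=\prod_{x:A}B$, whose extension is $\{f\mid \forall x\in\ext(A)\ fx\in\ext(B)\}$. I would set
\begin{equation*}
    c=\sup\bigl(A\to B,\ \lambda g.\sup(A,\lambda x.G(gx))\bigr).
\end{equation*}
First I would check $c\in\sfV$: since $A\to B$ is a (code for a) type and, for every $g\in\ext(A\to B)$ and $x\in\ext(A)$, we have $gx\in\ext(B)$ and hence $G(gx)\in\sfV$, each $\sup(A,\lambda x.G(gx))$ lies in $\sfV$, so $c$ does too. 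Note that $\immd(\frakt(c))=\ext(A\to B)$ and $\frakt(c)\downarrow g=\frakt(\sup(A,\lambda x.G(gx)))$ for each $g\in\ext(A\to B)$.

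Next, given any well-founded tree $R$ and a realizer $r$ with $r\Vdash R\colon\frakt(a)\rrarrows\frakt(b)$, I would extract the \emph{index-choice function} $g=\lambda x.(rx)_0$. Exactly as in the proof of second-order Strong Collection, unfolding $r$ shows $(rx)_0\in\immd(\frakt(b))=\ext(B)$ for each $x\in\ext(A)$, so $g\in\ext(A\to B)$; thus $g$ is a legitimate index into $c$, and the candidate subimage is $d:=\frakt(c)\downarrow g=\frakt(\sup(A,\lambda x.G(gx)))$. The point is that the subimage depends on $r$ only through $g$, while the witnesses that the relevant pairs lie in $R$ are supplied by the second components $(rx)_1$. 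I would then realize $R\colon\frakt(a)\lrlrarrows d$ by reusing these: for the forward (domain) direction, for $x\in\ext(A)$ the pair $\bfp\,x\,((rx)_1)$ witnesses $\exists y\in d$ with $\langle x,y\rangle\in R$, since $\frakt(d)\downarrow x=\frakt(G(gx))=\frakt(b)\downarrow(rx)_0$; for the backward (subimage) direction the same term $\bfp\,x\,((rx)_1)$ witnesses the required $\exists x\in\frakt(a)$, so both directions are realized by $\lambda x.\bfp\,x\,((rx)_1)$.

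Finally I would assemble the global realizer
\begin{equation*}
    \lambda a,b.\ \bfp\,c\,\bigl(\lambda r.\ \bfp\,(\lambda x.(rx)_0)\,(\bfp\,(\lambda x.\bfp\,x\,((rx)_1))(\lambda x.\bfp\,x\,((rx)_1)))\bigr),
\end{equation*}
reading $A,F,B,G$ off $a,b$ via $\bfq_1,\bfq_2$, and note that the map $a,b\mapsto c$ is recursive. A small but essential point is that the outer quantifier $\forall R$ is unbounded, so its realizer must be \emph{uniform} in $R$; this holds because $g$, $d$, and all the realizers above are built from $r$ alone and never mention $R$. The main obstacle is conceptual rather than computational: one must see that letting the index $g$ range over the \emph{entire} function type $A\to B$ already exhausts every possible subimage of every multi-valued function $\frakt(a)\rrarrows\frakt(b)$, which is precisely the mechanism by which the dependent product tames Subset Collection. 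The subsidiary checks are that the extracted $g$ genuinely lands in $\ext(A\to B)$, and that the bare index function suffices rather than a $\prod_{x:A}\sum$-type carrying the membership witnesses — the latter because those witnesses are recovered from $r$ on demand.
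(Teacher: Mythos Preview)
Your proposal is correct and follows essentially the same route as the paper: both construct $c=\sup(A\to B,\lambda g.\sup(A,\lambda x.G(gx)))$, extract the index-choice function $g=\lambda x.(rx)_0$ from a realizer $r$ of $R\colon\frakt(a)\rrarrows\frakt(b)$, and reuse the second components $(rx)_1$ to realize $R\colon\frakt(a)\lrlrarrows\frakt(d)$ for $d=\frakt(c)\downarrow g$. Your writeup is in fact more explicit than the paper's about why $g\in\ext(A\to B)$ and why the realizer is uniform in $R$, and your realizer $\lambda x.\bfp\,x\,((rx)_1)$ for the forward direction is cleaner than the paper's (where the first component $f$ of the pair appears to be a minor slip, since $(fx)_0\in\ext(B)$ rather than $\ext(A)$).
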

\begin{proof}
    Let us take $\phi(x,y,R)\equiv (\lag x,y\rag\in R)$ as before, and $a,b\in\sfV$. Define
    \begin{equation*}
        c = \sup(\bfq_1 a\to \bfq_1 b, \lambda z. \sup(\bfq_1 a, \lambda x.(\bfq_2 b)(zx)).
    \end{equation*}
    Now assume that $f\Vdash \forall x\in \frakt(a)\exists y\in\frakt(b) \phi(x,y,R)$. If we take $d=\sup(\bfq_1 a, \lambda x.(\bfq_2 b)((fx)_0)$, then 
    \begin{equation*}
        \bfp f(\lambda x.\bfp x(fx)_1) \Vdash \forall x\in \frakt(a) \exists y\in\frakt(d) \phi(x,y,R) \land \forall y\in\frakt(d) \exists x\in \frakt(a) \phi(x,y,R).
    \end{equation*}
    Furthermore, we have
    \begin{equation*}
        \bfp(\lambda x.(\bfq_2b)(fx)_0)\mathsf{i}_r \Vdash \frakt(d)\in \frakt(c).
    \end{equation*}
    Thus we can construct a realizer that realizes $M$ satisfies second-order Collection.
\end{proof}

By combining all the realizers we have constructed, we have
\begin{corollary} \pushQED{\qed}
    We can realize $M$ is a transitive model of second-order $\CZF$. Hence Lubarsky's interpretation satisfies the intermediate theory $T$. \qedhere 
\end{corollary}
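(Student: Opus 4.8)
The plan is to assemble the realizers constructed in the preceding lemmas and propositions. The theory $T$ consists of two groups of axioms: the axioms of $\CZF$ with full Separation, and the single existential assertion (axiom (2) of $T$) that there is a transitive set $M$ satisfying second-order $\CZF$ in the stated sense. The first group is already discharged by Lubarsky's theorem, so the only work left is to realize the existential assertion.

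First I would fix the witness for the existential. The obvious candidate is the concrete well-founded tree $M=\{\lag\rag\}\cup\{\lag x\rag^\frown\sigma\mid x\in\sfV,\ \sigma\in\frakt(x)\}$ built from the type structure. We have already shown that $M$ is well-founded, so $\Set(M)$ holds; by the realizability clause for the unbounded existential quantifier, realizing $\exists X\,\phi(X)$ with this witness is the same as exhibiting a realizer of $\phi(M)$, where $\phi(M)$ abbreviates the conjunction defining a transitive model of second-order $\CZF$.

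Next I would build that realizer of $\phi(M)$ by pairing. Each conjunct of $\phi(M)$ -- transitivity, closure under Union together with $\omega\in M$, closure under Bounded Separation, second-order Strong Collection, and second-order Subset Collection -- has already been realized in an earlier result. Using the clause that $e\Vdash\psi\land\chi$ iff $(e)_0\Vdash\psi$ and $(e)_1\Vdash\chi$, I would iterate the pairing combinator $\bfp$ to glue these realizers into a single realizer of the full conjunction, and hence of the existential statement. Combined with Lubarsky's theorem for the $\CZF$-with-full-Separation axioms, this shows that every axiom of $T$ is realized.

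I expect the only obstacle here to be bookkeeping rather than mathematics. Two points deserve care. First, closure under Bounded Separation is really a scheme indexed by bounded formulas, but the corresponding proof produced a \emph{uniform} recursive realizer (built through the type $\|\phi\|$), so all instances are realized simultaneously by the one concrete $M$ and may be treated as a single conjunct in the pairing. Second, one must check that the second-order Strong and Subset Collection clauses are read as genuine set-quantifications over all relations $R$ of the ambient model; this is legitimate precisely because $T$ proves full Separation, so ``every relation $R$'' ranges over actual sets, matching the form of the realizers already constructed. Granting these, the paired realizer witnesses axiom (2) of $T$, and the proof is complete.
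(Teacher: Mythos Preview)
Your proposal is correct and matches the paper's approach exactly: the paper states the corollary with an immediate \qed, saying only ``By combining all the realizers we have constructed,'' and you have spelled out that combination (pair the previously built realizers for transitivity, Union and $\omega\in M$, Bounded Separation, second-order Strong Collection, and second-order Subset Collection via $\bfp$, then invoke Lubarsky's theorem for the $\CZF$-with-full-Separation part). One minor remark: your second bookkeeping point about full Separation is unnecessary---in the first-order theory $T$ the variable $R$ already ranges over sets, hence over well-founded trees in the realizability model, which is precisely the form of the parameter $R$ used in the earlier propositions; full Separation plays no role in this particular step.
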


\begin{theorem} 
    The proof-theoretic strength of $\CKM$ is that of Second-Order Arithmetic.
\end{theorem}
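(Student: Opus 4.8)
The plan is to prove the theorem by establishing two proof-theoretic reductions that together pin down the strength of $\CKM$: that $\CKM$ reduces to full Second-Order Arithmetic, and conversely that full Second-Order Arithmetic reduces to $\CKM$. Combining them yields the claimed equivalence.

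For the first (upper) direction I would simply assemble what has already been done. The corollary above shows that Lubarsky's realizability over Kleene's first algebra validates the intermediate theory $T$, so $T$ is proof-theoretically reducible to full Second-Order Arithmetic; this reduction is schematic, verified instance-by-instance, which is exactly why one does not thereby obtain a single-sentence proof of $\mathrm{Con}(\CKM)$ inside Second-Order Arithmetic. The lemma that $(M,\mathcal{P}(M))$ models $\CKM$ over $T$ then furnishes a relative interpretation of $\CKM$ into $T$: the set variables of $\CKM$ are interpreted by the predicate ``$\in M$'', the class variables by ``$\subseteq M$'', and membership by its literal meaning. Composing the realizability reduction with this interpretation reduces $\CKM$ to full Second-Order Arithmetic.

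For the second (lower) direction I would interpret Second-Order Arithmetic into $\CKM$ using the arithmetic structure already present inside $\CKM$: the first-order sort is interpreted by $\omega$ (with its usual arithmetic, provably available in $\CZF$) and the second-order sort by the subclasses of $\omega$, that is, the classes $X$ with $\forall^0 x\,(x\in X\to x\in\omega)$. The Peano axioms and numerical induction for arbitrary classes hold because $\omega$ is the least inductive set, and the full comprehension schema of Second-Order Arithmetic is matched by Full Comprehension of $\CKM$, which for a second-order formula $\phi$ produces the class $\{x\mid x\in\omega\land\phi(x)\}$. This is precisely where the extra strength of $\CKM$ over $\CGB$ is used: Elementary Comprehension alone would yield only an arithmetical ($\ACA$-style) fragment, whereas Full Comprehension delivers the full schema. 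Since $\CKM$ is intuitionistic while Second-Order Arithmetic is classical, I would precompose with the G\"odel--Gentzen negative translation, checking that the translation of each arithmetical axiom (in particular comprehension and induction) remains a second-order formula and so is still captured by Full Comprehension; because $\bot$ is its own negative translation, a contradiction in classical Second-Order Arithmetic would transfer to $\CKM$.

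The main obstacle is conceptual rather than computational and lies almost entirely in the first direction, which is why the preceding part of this section was devoted to building and verifying the realizers realizing second-order Strong and Subset Collection through the type structure attached to $M$. The remaining lower-direction argument is routine once one is careful on two points: that the negative translation does not carry us outside the reach of Full Comprehension, and that both reductions are uniform-per-instance, so that ``equal proof-theoretic strength'' is understood as mutual reducibility—matching proof-theoretic ordinals and equiconsistency—rather than as either theory proving the other's consistency as a single sentence.
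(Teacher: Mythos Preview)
Your proposal is correct and follows essentially the same approach as the paper: the upper bound composes the interpretation of $\CKM$ into $T$ via $(M,\mathcal{P}(M))$ with the realizability reduction of $T$ to full Second-Order Arithmetic, and the lower bound interprets the second-order sort as subclasses of $\omega$ (using Full Comprehension) together with a double-negation translation to pass from classical to intuitionistic Second-Order Arithmetic. Your additional remarks on the schematic nature of the reduction and the care needed with the negative translation are accurate elaborations, but the core argument is the same.
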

\begin{proof}
    For the one direction, we proved that the intermediate theory $T$ interprets $\CKM$, and the full Second-Order Arithmetic interprets the intermediate theory.
    For the other direction, it is easy to prove that $\CKM$ interprets intuitionistic full Second-Order Arithmetic, by interpreting second-order predicates over $\mathbb{N}$ to \emph{subclasses} of $\omega$ over $\CKM$. It finishes the proof since the intuitionistic full Second-Order Arithmetic interprets classical full Second-Order Arithmetic via a double-negation translation.
\end{proof}

As a final note, it is possible to add other type constructors, like $\mathsf{W}$-types, into the type system defining $\mathsf{V}$. Adding additional constructors would make $M$ in the intermediate theory a model of $\CZF$ with large set axioms, and especially, we can interpret $\CKM$ with the regular extension axiom from the Second-Order Arithmetic.

\printbibliography

\end{document}